\theoremstyle{plain}
\newtheorem{theorem}                 {Theorem}      [section]
\newtheorem{proposition}  [theorem]  {Proposition}
\newtheorem{corollary}    [theorem]  {Corollary}
\theoremstyle{definition}
\newtheorem{remark}       [theorem]  {Remark}
\newtheorem{defi}         [theorem]  {Definition}
\numberwithin{equation}{section}
\def \mcf{f}
\def \mcv{H}
\DeclareMathOperator{\trace}{trace}
\DeclareMathOperator{\grad}{grad}
\DeclareMathOperator{\Id}{Id}
\DeclareMathOperator{\spn}{span}
\DeclareMathOperator{\degree}{degree}
\begin{document}

\title[Biharmonic submanifolds in $\mathbb{S}^{n}$]
{Biharmonic PNMC submanifolds in spheres}

\date{\today}

\author{A.~Balmu\c s}
\address{Faculty of Mathematics, ``Al.I.~Cuza'' University of Iasi\\
\newline
Bd. Carol I Nr. 11 \\
700506 Iasi, ROMANIA}
\email{adina.balmus@uaic.ro}
\author{S. Montaldo}
\address{Universit\`a degli Studi di Cagliari\\
Dipartimento di Matematica\\
\newline
Via Ospedale 72\\
09124 Cagliari, ITALIA} \email{montaldo@unica.it}
\author{C. Oniciuc}
\address{Faculty of Mathematics, ``Al.I.~Cuza'' University of Iasi\\
\newline
Bd. Carol I Nr. 11 \\
700506 Iasi, ROMANIA} \email{oniciucc@uaic.ro}

\subjclass[2010]{58E20}

\thanks{The first author was supported by Grant POSDRU/89/1.5/S/49944, Romania. The second author was supported by Contributo d'Ateneo, University of Cagliari, Italy. The third author was supported by Grant PN-II-RU-TE-2011-3-0108, Romania.}

\begin{abstract}
We obtain several rigidity results for biharmonic submanifolds in
$\mathbb{S}^{n}$ with parallel normalized mean curvature vector
field. We classify biharmonic submanifolds in $\mathbb{S}^{n}$ with
parallel normalized mean curvature vector field and with at most two
distinct principal curvatures. In particular, we determine all
biharmonic surfaces with parallel normalized mean curvature vector
field in $\mathbb{S}^n$.

Then we investigate, for (not necessarily compact) proper biharmonic
submanifolds in $\mathbb{S}^n$, their type in the sense of
B-Y.~Chen. We prove: (i) a proper biharmonic submanifold in
$\mathbb{S}^n$ is of $1$-type or $2$-type if and only if it has
constant mean curvature ${\mcf}=1$ or ${\mcf}\in(0,1)$,
respectively; (ii) there are no proper biharmonic $3$-type
submanifolds with parallel normalized mean curvature vector field in
$\mathbb{S}^n$.
\end{abstract}

\keywords{biharmonic submanifolds, finite type submanifolds}

\maketitle

\section{Introduction}

Let $\varphi:M\to (N,h)$ be the inclusion of a submanifold $M$ into a
Riemannian manifold $(N,h)$. We say that the inclusion is {\em
biharmonic}, or $M$ is biharmonic, if its mean curvature vector
field $\mcv$ satisfies the following equation
\begin{eqnarray}\label{eq: bih_eq}
\tau_2(\varphi)=- m\left(\Delta \mcv + \trace{R^{N}}(
d\varphi(\cdot), \mcv) d\varphi(\cdot)\right)=0,
\end{eqnarray}
where $\Delta$ denotes the rough Laplacian on sections of the
pull-back bundle $\varphi^{-1}(TN)$ and $R^N$ denotes the curvature
operator on $(N,h)$. The section $\tau_2(\varphi)$ is called the
{\em bitension field}.

When $M$ is compact, the biharmonic condition arises from a
variational problem for maps: for an arbitrary smooth map
$\varphi:(M,g)\to (N,h)$ we define
$$
E_{2}\left( \varphi \right) = \frac{1}{2} \int_{M} |\tau(\varphi)|^{2}\, v_{g},
$$
where $\tau(\varphi)=\trace\nabla d\varphi$ is the {\it tension field}. The functional $E_2$ is called  the {\em bienergy functional}. In the particular case when $\varphi:(M,g)\to (N,h)$ is a Riemannian immersion, the tension
field has the expression $\tau(\varphi)=m\mcv$ and equation \eqref{eq: bih_eq} is equivalent to $\varphi$ being a critical point of $E_2$.

Obviously, any minimal submanifold ($\mcv=0$) is biharmonic. The non-harmonic biharmonic submanifolds  are called {\it proper biharmonic}.

The study of proper biharmonic submanifolds is nowadays becoming a
very active subject and its popularity initiated with the
challenging conjecture of B-Y.~Chen: {\em any biharmonic submanifold
in an Euclidean space is minimal}.

Due to some nonexistence results (see \cite{J86, O02}) the Chen
conjecture was generalized to: {\em any biharmonic submanifold in a
Riemannian manifold with nonpositive sectional curvature is
minimal}, but this was proved not to hold. Indeed, in \cite{OT10}
the authors constructed examples of proper biharmonic hypersurfaces
in a $5$-dimensional space of non-constant negative sectional
curvature.

Yet, the conjecture is still open in its full generality for ambient
spaces with constant nonpositive sectional curvature, although it
was proved to be true in numerous cases when additional geometric
properties for the submanifolds were assumed (see, for example,
\cite{BMO08,CMO02,C91,D92,HV95}).

By way of contrast, as we shall detail in Section~\ref{sec:
bih-sub}, there are several families of examples of proper
biharmonic submanifolds in the $n$-dimensional unit Euclidean sphere
$\mathbb{S}^{n}$. For simplicity we shall denote these classes by
B1, B2, B3 and B4. Nevertheless, a full understanding of the
geometry of proper biharmonic submanifolds in $\mathbb{S}^n$ has not
been achieved. The goal of this paper is to continue the study of
proper biharmonic submanifolds in $\mathbb{S}^{n}$ that was
initiated for the very first time in \cite{J86} and then developed
in \cite{BMO10} -- \cite{CMO01}, \cite{NU11, O02}.

In \cite{BO11} the proper biharmonic submanifolds with parallel mean
curvature vector field (PMC) in $\mathbb{S}^n$ were studied. In the
first part of this paper we extend our study to biharmonic
submanifolds with parallel normalized mean curvature vector field
(PNMC). We recall that there exist PNMC surfaces which are not PMC
(see \cite{C80, L}) and, obviously, a PNMC submanifold is PMC if and
only if it has constant mean curvature (CMC). We underline the fact
that all known examples of proper biharmonic submanifolds in spheres
are CMC, but there is no general result concerning the constancy of
the mean curvature of proper biharmonic submanifolds in
$\mathbb{S}^n$.

First, in Section~\ref{sec: pnmc}, under some hypotheses on the mean
curvature or on the squared norm of the Weingarten operator
associated to the mean curvature vector field, we prove that
compact, or complete, PNMC biharmonic submanifolds are PMC.

As we shall see in Section~\ref{sec: pnmc_<2}, PNMC pseudo-umbilical
biharmonic submanifolds in $\mathbb{S}^{n}$ are of class B3. We then
study the PNMC  biharmonic submanifolds in $\mathbb{S}^{n}$ with at
most two distinct principal curvatures in the direction of the mean
curvature vector field, proving that they are CMC and belong to the
classes B3 or B4 (Theorem~\ref{th: class_PNMC_AH_2}).

The second part of the paper is devoted to finite type submanifolds.
These submanifolds were introduced by B-Y.~Chen (see, for example,
\cite{C96,C84}) in the attempt of finding the best possible estimate
of the total mean curvature of a compact submanifold in the
Euclidean space. Although defined in a different manner, finite type
submanifolds arise also, in a natural way, as solutions of a
variational problem.

We prove that proper biharmonic submanifolds in spheres are of $1$-type or $2$-type if and only if they are CMC with mean curvature ${\mcf}=1$ or ${\mcf}\in(0,1)$, respectively (Theorem~ \ref{th: type12bih}).

Moreover, we prove that there are no $3$-type PNMC biharmonic submanifolds in $\mathbb{S}^{n}$ (Theorem~\ref{th: type3bih}),
obtaining  the nonexistence of $3$-type biharmonic hypersurfaces in
$\mathbb{S}^{n}$ (Corollary~\ref{cor: type3bih_hypersurf}).

Finally, under some extra conditions (mass-symmetric and independent) on finite $k$-type submanifolds in $\mathbb{S}^n$  we prove that biharmonicity implies that $k=2$  (Proposition~\ref{prop: high_type_bih}).

\vspace{2mm}

{\bf Conventions.}
Throughout this paper all manifolds, metrics, maps are assumed to be smooth, i.e. $C^\infty$. All manifolds are assumed to be connected. The following sign conventions are used
$$
\Delta V=-\trace\nabla^2 V\,,\qquad R^N(X,Y)=[\nabla_X,\nabla_Y]-\nabla_{[X,Y]},
$$
$V\in C(\varphi^{-1}(TN))$ and $X,Y\in C(TN)$.

\vspace{2mm}

{\bf Acknowledgements.}
The authors would like to thank Professors B-Y.~Chen and I.~Dimitric for helpful discussions.

\section{Biharmonic submanifolds}\label{sec: bih-sub}

The key ingredient in the study of biharmonic submanifolds is the
splitting of the bitension field with respect to its normal and
tangent components.

\begin{theorem}\label{th: bih subm N}
The canonical inclusion $\varphi:M^m\to N^n$ of a submanifold $M$ in a Riemannian manifold $N$ is biharmonic if and only if the normal and the tangent components of $\tau_2(\varphi)$ vanish, i.e. respectively
\begin{subequations}
\begin{equation}\label{eq: caract_bih_normal}
\Delta^\perp {\mcv}+\trace B(\cdot,A_{\mcv}\cdot)+\trace(R^N(\cdot,{\mcv})\cdot)^\perp=0,
\end{equation}
and
\begin{eqnarray}\label{eq: caract_bih_tangent}
\frac{m}{2}\grad {\mcf}^2+2\trace A_{\nabla^\perp_{(\cdot)}{\mcv}}(\cdot)+2\trace(R^N(\cdot,{\mcv})\cdot)^{\top}&=&\nonumber\\
-\frac{m}{2}\grad {\mcf}^2+2\trace (\nabla A_{\mcv})(\cdot,\cdot)&=&0,
\end{eqnarray}
\end{subequations}
where $A$ denotes the Weingarten operator, $B$ the second
fundamental form, ${\mcv}$ the mean curvature vector field, $\mcf=|\mcv|$ the mean curvature function,
$\nabla^\perp$ and $\Delta^\perp$ the connection and the Laplacian
in the normal bundle of $M$ in $N$.
\end{theorem}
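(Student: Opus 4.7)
The plan is to exploit the fact that for a Riemannian immersion $\varphi$ one has $\tau(\varphi)=m\mcv$, so the bitension field in the statement takes values in $\varphi^{-1}(TN)$ and can be split pointwise into tangent and normal components along $M$. My approach is to fix $p\in M$ and a local orthonormal frame $\{e_i\}$ on $M$ that is geodesic at $p$, compute $\Delta\mcv$ on this frame, and decompose both the Laplacian and the curvature contribution summand by summand.

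For the rough Laplacian I would apply the Weingarten formula $\nabla_{e_i}\mcv=-A_\mcv e_i+\nabla^\perp_{e_i}\mcv$ and differentiate once more, handling the tangent summand by Gauss and the normal one by Weingarten a second time. Evaluated at $p$, where $\nabla^M_{e_i}e_j=0$, this yields
$$
\Delta\mcv=\Delta^\perp\mcv+\trace B(\cdot,A_\mcv\cdot)+\trace\nabla^M(A_\mcv\cdot)+\trace A_{\nabla^\perp_{(\cdot)}\mcv}(\cdot),
$$
in which the first two summands are normal and the last two tangent. Splitting $\trace R^N(d\varphi(\cdot),\mcv)d\varphi(\cdot)$ in the same way and demanding that the normal component of $\tau_2(\varphi)$ vanish reproduces \eqref{eq: caract_bih_normal}, while the tangent component gives the preliminary identity $\trace\nabla^M(A_\mcv\cdot)+\trace A_{\nabla^\perp_{(\cdot)}\mcv}(\cdot)+(\trace R^N(\cdot,\mcv)\cdot)^\top=0$.

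To bring this into the form of \eqref{eq: caract_bih_tangent} I would invoke the Codazzi--Mainardi equation, which in the form $(\nabla_X A)_\mcv Y-(\nabla_Y A)_\mcv X=-(R^N(X,Y)\mcv)^\top$ (with $(\nabla_X A)_\mcv Y=\nabla^M_X(A_\mcv Y)-A_\mcv(\nabla^M_X Y)-A_{\nabla^\perp_X\mcv}Y$) can be traced against $\{e_i\}$. Using $\trace A_\mcv=m\mcf^2$ and $\trace A_{\nabla^\perp_X\mcv}=\tfrac{m}{2}X(\mcf^2)$, this trace yields the identity
$$
\trace(\nabla A_\mcv)(\cdot,\cdot)=\tfrac{m}{2}\grad\mcf^2+\trace A_{\nabla^\perp_{(\cdot)}\mcv}(\cdot)+(\trace R^N(\cdot,\mcv)\cdot)^\top,
$$
and one substitution turns the preliminary identity into the first line of \eqref{eq: caract_bih_tangent}, while a second substitution produces the equivalent second line. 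The bulk of the argument is bookkeeping; the only genuinely nontrivial step is this Codazzi trace, where one must correctly account for $\nabla^\perp$ acting on the normal vector field $\mcv$ in the definition of the covariant derivative of the Weingarten operator, and pay careful attention to the sign conventions declared at the end of the introduction.
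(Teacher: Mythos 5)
Your strategy is the standard one and it does work. Note that the paper itself offers no proof of Theorem~\ref{th: bih subm N} (it refers to \cite{C84, O02} and \cite{O10}), so there is nothing to compare line by line; what you propose is essentially the argument of those references: split $\Delta\mcv$ pointwise via the Gauss and Weingarten formulas into $\Delta^\perp\mcv+\trace B(\cdot,A_{\mcv}\cdot)$ (normal) plus $\trace\nabla^M(A_{\mcv}\cdot)+\trace A_{\nabla^\perp_{(\cdot)}\mcv}(\cdot)$ (tangent), split the curvature term likewise, and convert the tangent equation by a traced Codazzi identity. Your decomposition of $\Delta\mcv$, the signs, and the two substitutions producing the two equivalent forms in \eqref{eq: caract_bih_tangent} are all correct.

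The one thing you must repair is precisely the step you flag as delicate. The traced identity you state,
\[
\trace (\nabla A_{\mcv})(\cdot,\cdot)=\tfrac{m}{2}\grad \mcf^2+\trace A_{\nabla^\perp_{(\cdot)}\mcv}(\cdot)+\bigl(\trace R^N(\cdot,\mcv)\cdot\bigr)^{\top},
\]
is the correct one, and it is the convention the paper uses (compare \eqref{eq-trace-A-pseudo-umbilical} and \eqref{eq:trace-A-nabla-perp-grad}), but it holds only when $\nabla A_{\mcv}$ means the covariant derivative of $A_{\mcv}$ as a $(1,1)$-tensor on $M$, i.e. $(\nabla_X A_{\mcv})Y=\nabla^M_X(A_{\mcv}Y)-A_{\mcv}(\nabla^M_X Y)$, so that $\trace(\nabla A_{\mcv})(\cdot,\cdot)=\trace\nabla^M(A_{\mcv}\cdot)$ in a frame geodesic at the point. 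With the definition you give in parentheses, which also subtracts $A_{\nabla^\perp_X\mcv}Y$, the trace of Codazzi yields instead $\trace(\nabla A)_{\mcv}(\cdot,\cdot)=\tfrac{m}{2}\grad\mcf^2+(\trace R^N(\cdot,\mcv)\cdot)^{\top}$, without the $\trace A_{\nabla^\perp_{(\cdot)}\mcv}(\cdot)$ term (that term is exactly the difference between the two derivatives, via $\trace A_{\nabla^\perp_Y\mcv}=\tfrac{m}{2}Y(\mcf^2)$); carried through literally, your convention would not reproduce the second line of \eqref{eq: caract_bih_tangent} as printed. So either drop the $-A_{\nabla^\perp_X\mcv}Y$ term from your definition, or keep it and reinstate $\trace A_{\nabla^\perp_{(\cdot)}\mcv}(\cdot)$ when passing from $(\nabla A)_{\mcv}$ back to $\trace\nabla^M(A_{\mcv}\cdot)$. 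With that bookkeeping fixed, the proof is complete.
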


This result was obtained in \cite{C84, O02} for submanifolds in space forms, and in \cite{O10} for general hypersurfaces. We
note that the tangent part of $\tau_2(\varphi)$ vanishes if and only
if the stress-energy tensor for biharmonic maps associated to
$\varphi$ vanishes (see \cite{LMO08, J87}). In the case the ambient
space is a space form of sectional curvature $c$, \eqref{eq:
caract_bih_normal} -- \eqref{eq: caract_bih_tangent} reduce to
\begin{corollary}[\cite{C84, O02}]\label{th: bih subm S^n}
The canonical inclusion $\varphi:M^m\to\mathbb{E}^n(c)$ of a submanifold $M$ in the space form $\mathbb{E}^n(c)$ is biharmonic if and only if
\begin{equation}\label{eq: caract_bih_spheres}
\left\{
\begin{array}{l}
\ \Delta^\perp {\mcv}+\trace B(\cdot,A_{\mcv}\cdot)-mc\,{\mcv}=0,
\vspace{2mm}
\\
\ 2\trace A_{\nabla^\perp_{(\cdot)}{\mcv}}(\cdot)
+\frac{m}{2}\grad {\mcf}^2=0.
\end{array}
\right.
\end{equation}
\end{corollary}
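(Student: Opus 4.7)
The plan is to specialize Theorem~\ref{th: bih subm N} to the case when the ambient manifold is a space form $\mathbb{E}^n(c)$ of constant sectional curvature $c$. All that has to happen is the simplification of the two curvature traces appearing in \eqref{eq: caract_bih_normal} and \eqref{eq: caract_bih_tangent}; everything else in the bitension field is purely intrinsic to $M$ and its second fundamental form, so it carries over unchanged.

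First I would recall that on $\mathbb{E}^n(c)$ the curvature operator takes the explicit form
$$
R^N(X,Y)Z = c\bigl(g(Y,Z)X - g(X,Z)Y\bigr).
$$
Fixing a local orthonormal frame $\{e_i\}_{i=1}^m$ on $M$ and setting $Y=\mcv$, $X=Z=e_i$, the trace becomes
$$
\trace R^N(\cdot,\mcv)\cdot \;=\; \sum_{i=1}^m R^N(e_i,\mcv)e_i \;=\; c\sum_{i=1}^m \bigl(g(\mcv,e_i)e_i - g(e_i,e_i)\mcv\bigr).
$$

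Next, since $\mcv$ is a normal vector field along $M$ and the $e_i$ are tangent, $g(\mcv,e_i)=0$, so the right-hand side collapses to $-cm\,\mcv$. This vector is normal to $M$, hence
$$
\bigl(\trace R^N(\cdot,\mcv)\cdot\bigr)^\perp = -cm\,\mcv, \qquad \bigl(\trace R^N(\cdot,\mcv)\cdot\bigr)^\top = 0.
$$
Plugging these identities into \eqref{eq: caract_bih_normal} and into the first form of \eqref{eq: caract_bih_tangent} from Theorem~\ref{th: bih subm N} produces exactly the system \eqref{eq: caract_bih_spheres}.

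There is no real obstacle here: the proof is a single substitution once the space-form curvature tensor is written out. The only point deserving any care is verifying that the tangential trace of the curvature contribution vanishes identically, so that the tangent biharmonic equation in the space-form setting reduces cleanly to the algebraic relation between $\grad \mcf^2$ and the Weingarten operator $A_{\mcv}$, with no leftover ambient-curvature term.
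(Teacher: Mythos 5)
Your proposal is correct and follows exactly the route the paper intends: the corollary is stated as a direct reduction of Theorem~\ref{th: bih subm N} (with a citation in place of a written proof), and substituting $R^N(X,Y)Z=c\bigl(\langle Y,Z\rangle X-\langle X,Z\rangle Y\bigr)$ to get $\trace R^N(\cdot,\mcv)\cdot=-mc\,\mcv$, purely normal, is precisely the computation being left to the reader. Your observation that the tangential curvature contribution vanishes, so the second equation of \eqref{eq: caract_bih_spheres} carries no ambient-curvature term, is the only point of substance and you handle it correctly.
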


Up to now there are not known examples of proper biharmonic
submanifolds in a space form $\mathbb{E}^n(c)$ with $c\leq 0$, i.e.
proper solutions of \eqref{eq: caract_bih_spheres} with $c\leq 0$.
This fact has suggested, as we have mentioned in the introduction,
the  generalized Chen conjecture.

If $c=1$, the situation is rather different and the following are considered  to be the {\em main examples} of proper biharmonic submanifolds in $\mathbb{S}^n=\mathbb{E}^n(1)$:

\begin{list}{\labelitemi}{\leftmargin=2em\itemsep=1.5mm\topsep=2mm}
\item[{\bf B1}.] The small hypersphere
\begin{equation*}\label{eq: small_hypersphere}
\mathbb{S}^{n-1}(1/\sqrt 2)=\left\{(x,1/\sqrt 2)\in\mathbb{R}^{n+1}: x\in \mathbb{R}^n, |x|^2=1/2\right\}\subset\mathbb{S}^{n}.
\end{equation*}
\item[{\bf B2}.] The standard products of spheres
\begin{equation*}\label{eq: product_spheres}
\mathbb{S}^{n_1}(1/\sqrt 2)\times\mathbb{S}^{n_2}(1/\sqrt 2)=\left\{(x,y)\in\mathbb{R}^{n_1+1}\times\mathbb{R}^{n_2+1}, |x|^2=|y|^2=1/2\right\}\subset\mathbb{S}^{n},
\end{equation*}
$n_1+n_2=n-1$ and $n_1\neq n_2$.
\item[{\bf B3}.] The minimal submanifols $M$ in a small hypersphere $\mathbb{S}^{n-1}(1/\sqrt 2)\subset\mathbb{S}^n$.
\item[{\bf B4}.] The minimal submanifolds $M_1^{m_1}\times M_2^{m_2}$ in $\mathbb{S}^{n_1}(1/\sqrt 2)\times\mathbb{S}^{n_2}(1/\sqrt 2)\subset\mathbb{S}^n$, with $n_1+n_2=n-1$, $m_1\neq m_2$.
\end{list}

Example B2 was found in  \cite{J86}, while example B1 was derived in \cite{CMO01}. The two families of examples described in B3 and B4 were constructed in \cite{CMO02}. Moreover, B3 is a consequence of the following property.

\begin{theorem}[\cite{CMO02}]\label{th: rm_minim}
Let $M$ be a minimal submanifold in a small hypersphere $\mathbb{S}^{n-1}(a)\subset\mathbb{S}^n$, $a\in(0,1)$. Then $M$ is proper biharmonic in $\mathbb{S}^{n}$ if and only if $a=1/\sqrt 2$.
\end{theorem}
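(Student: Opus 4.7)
The plan is to exploit the totally umbilical inclusion $\mathbb{S}^{n-1}(a)\hookrightarrow\mathbb{S}^n$ to reduce biharmonicity to an algebraic condition on $a$. First I would explicitly identify a unit normal $\eta$ to $\mathbb{S}^{n-1}(a)$ inside $\mathbb{S}^n$ (viewing both inside $\mathbb{R}^{n+1}$) and check by a direct computation that $\mathbb{S}^{n-1}(a)$ is totally umbilical in $\mathbb{S}^n$ with $A_\eta=-(\sqrt{1-a^2}/a)\,\Id$, and that $\bar\nabla_X\eta$ is tangent to $\mathbb{S}^{n-1}(a)$ for every $X$ tangent there.

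Next, using the Gauss decomposition $B^{M\subset\mathbb{S}^n}=B^{M\subset\mathbb{S}^{n-1}(a)}+B^{\mathbb{S}^{n-1}(a)\subset\mathbb{S}^n}|_{TM\times TM}$ together with the minimality of $M$ in $\mathbb{S}^{n-1}(a)$, I would take traces and get
$$
\mcv=-\frac{\sqrt{1-a^2}}{a}\,\eta,
$$
so $\mcf$ is a positive constant on $M$ for every $a\in(0,1)$; in particular $M$ is automatically non-minimal in $\mathbb{S}^n$, and ``proper biharmonic'' is equivalent to biharmonic. The fact that $\bar\nabla_X\eta$ lies in $TM$ whenever $X\in TM$ then forces $\nabla^\perp\eta=0$ along $M$, which passes to $\mcv$, yielding $\nabla^\perp\mcv=0$, $\Delta^\perp\mcv=0$, and $\grad\mcf^2=0$. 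The tangential equation of Corollary~\ref{th: bih subm S^n} is therefore identically satisfied.

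Finally, I would substitute into the normal equation. Linearity of $A$ in its subscript together with the umbilicity give $A_{\mcv}=((1-a^2)/a^2)\,\Id$ on $TM$, so $\trace B(\cdot,A_{\mcv}\cdot)=((1-a^2)/a^2)\,m\mcv$, and the normal part of \eqref{eq: caract_bih_spheres} collapses to
$$
\Bigl(\frac{1-a^2}{a^2}-1\Bigr)m\,\mcv=0.
$$
Since $\mcv\neq 0$, this is equivalent to $a=1/\sqrt{2}$, which proves both directions at once. The only non-routine step is the initial umbilicity computation for $\mathbb{S}^{n-1}(a)\subset\mathbb{S}^n$ and the identification of $\eta$; once $A_\eta$ is in hand, the rest of the argument is forced by linearity of the Weingarten operator and the form of the biharmonic equation.
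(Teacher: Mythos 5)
Your proof is correct, and the paper itself gives no proof of this statement---it is quoted from \cite{CMO02}, where the argument is essentially the one you give: the totally umbilical inclusion $\mathbb{S}^{n-1}(a)\subset\mathbb{S}^n$ yields $\mcv=-\tfrac{\sqrt{1-a^2}}{a}\eta$ with $\nabla^\perp\mcv=0$ and $A_{\mcv}=\tfrac{1-a^2}{a^2}\Id$, so the tangential equation of \eqref{eq: caract_bih_spheres} holds automatically and the normal equation reduces to $\bigl(\tfrac{1-a^2}{a^2}-1\bigr)m\mcv=0$, i.e.\ $a=1/\sqrt{2}$. Your computation is also consistent with Theorem~\ref{th: classif_bih const mean}, since it shows these submanifolds have $\mcf=\sqrt{1-a^2}/a=1$ exactly when $a=1/\sqrt2$.
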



We note that the proper biharmonic submanifolds in $\mathbb{S}^n$
obtained from minimal submanifolds of the proper biharmonic
hypersphere $\mathbb{S}^{n-1}(1/\sqrt 2)$ have constant mean curvature ${\mcf}=1$.

More generally, we have the following bounds for the mean curvature  of  CMC proper  biharmonic submanifolds in $\mathbb{S}^n$.

\begin{theorem}[\cite{O03}]\label{th: classif_bih const mean}
Let $M$ be a CMC proper biharmonic submanifold in $\mathbb{S}^n$. Then ${\mcf}\in(0,1]$. Moreover, if
${\mcf}=1$, then $M$ is a minimal submanifold of a small hypersphere
$\mathbb{S}^{n-1}(1/\sqrt{2})\subset\mathbb{S}^n$.
\end{theorem}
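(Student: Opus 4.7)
The idea is to pair the normal biharmonic equation with $\mcv$ to produce a scalar pointwise identity, from which the bound $\mcf\le 1$ follows by Cauchy--Schwarz. Since $\mcf$ is constant, $\grad \mcf^2 = 0$, so the tangent equation in \eqref{eq: caract_bih_spheres} plays no role in the argument. Starting from
\[
\Delta^\perp \mcv + \trace B(\cdot, A_\mcv \cdot) - m\mcv = 0
\]
and taking the inner product with $\mcv$, the Bochner-type identity $\langle \Delta^\perp \mcv, \mcv\rangle = \tfrac12 \Delta|\mcv|^2 + |\nabla^\perp \mcv|^2$ (which one verifies from the sign convention $\Delta = -\trace\nabla^2$) collapses, by constancy of $|\mcv|^2 = \mcf^2$, to $|\nabla^\perp \mcv|^2$. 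The second term is easily seen in an orthonormal frame to equal $|A_\mcv|^2$. Hence
\[
|\nabla^\perp \mcv|^2 + |A_\mcv|^2 = m\mcf^2.
\]

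Now I would apply Cauchy--Schwarz to the eigenvalues of the symmetric operator $A_\mcv$. Since $\trace A_\mcv = m\mcf^2$, one has $|A_\mcv|^2 = \trace A_\mcv^2 \geq (\trace A_\mcv)^2/m = m\mcf^4$. Combined with the identity above this yields $m\mcf^4 \leq m\mcf^2$, so $\mcf \leq 1$. Proper biharmonicity excludes $\mcf \equiv 0$, so (being constant) $\mcf \in (0,1]$.

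For the rigidity case $\mcf = 1$: the identity forces $\nabla^\perp \mcv = 0$ and $|A_\mcv|^2 = m$, while the equality case of Cauchy--Schwarz forces all eigenvalues of $A_\mcv$ to coincide, hence $A_\mcv = \Id$. Set $\xi = \mcv$, a parallel unit normal field with $A_\xi = \Id$. A short Weingarten-formula computation gives $\nabla^{\mathbb{R}^{n+1}}_X \xi = -X$ for every $X \in TM$ (the normal-bundle derivative vanishes by parallelism, and the $\mathbb{S}^n\hookrightarrow\mathbb{R}^{n+1}$ correction vanishes because $\xi\perp\varphi$). Consequently $\varphi + \xi = c$ is constant in $\mathbb{R}^{n+1}$, and the relations $|c|^2 = 2$ and $\langle \varphi, c\rangle = 1$ then place $M$ inside the small hypersphere $\mathbb{S}^{n-1}(1/\sqrt 2)\subset\mathbb{S}^n$ centered at $c/2$. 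A final comparison of mean curvature vectors along the tower $M\subset \mathbb{S}^{n-1}(1/\sqrt 2)\subset \mathbb{S}^n$ shows that the mean curvature vector of $M$ in $\mathbb{S}^{n-1}(1/\sqrt 2)$ vanishes.

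The main obstacle is bookkeeping rather than ideas: one must carefully handle the sign convention for $\Delta$ in the Bochner identity, and in the rigidity step cleanly separate the Gauss/Weingarten data of the two embeddings $M\hookrightarrow \mathbb{S}^n$ and $\mathbb{S}^n\hookrightarrow \mathbb{R}^{n+1}$. The Cauchy--Schwarz step and the geometric rigidity are both standard once the pointwise identity and the parallel unit normal are in hand.
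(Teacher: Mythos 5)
Your proof is correct. The paper does not prove this theorem itself (it is quoted from \cite{O03}), but your argument --- pairing the normal equation with $\mcv$ to get $|\nabla^\perp\mcv|^2+|A_{\mcv}|^2=m\mcf^2$, applying Cauchy--Schwarz to $\trace A_{\mcv}=m\mcf^2$, and in the equality case producing the parallel unit normal $\xi$ with $A_\xi=\Id$ so that $\varphi+\xi$ is constant --- is essentially the standard proof given in the cited source, with all the sign conventions and the reduction to $\mathbb{S}^{n-1}(1/\sqrt{2})$ handled correctly.
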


Notice also that proper biharmonic submanifolds in $\mathbb{S}^n$
obtained from minimal submanifolds of $\mathbb{S}^{n-1}(1/\sqrt 2)$
have parallel mean curvature vector field (PMC) and are
pseudo-umbilical, i.e. $A_{\mcv}={\mcf}^2\Id$. In \cite{OW11} it was
proved that an umbilical biharmonic surface in any $3$-dimensional
Riemannian manifolds must be a CMC surface. This is a particular
case of the following proposition.

\begin{proposition}\label{th: bih_pseudo->CMC}
Let $\varphi:M^m\to N$ be a submanifold of a given space $N$, $m\neq 4$. If $M$ is pseudo-umbilical, then the tangent part of  $\tau_2(\varphi)$ vanishes, i.e. \eqref{eq: caract_bih_tangent} is satified, if and only if $M$ is CMC. In particular, if $M$ is a biharmonic pseudo-umbilical submanifold
of $N$, $m\neq 4$, then $M$ is CMC.
\end{proposition}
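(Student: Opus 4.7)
The plan is to exploit the second of the two equivalent formulations of the tangent bitension field in \eqref{eq: caract_bih_tangent}, namely $-\frac{m}{2}\grad{\mcf}^2 + 2\trace(\nabla A_{\mcv})(\cdot,\cdot) = 0$, because it becomes almost tautological once one imposes pseudo-umbilicality. Concretely, $A_{\mcv}$ is viewed here as the $(1,1)$-tensor field on $M$ defined by $Y_p \mapsto A_{{\mcv}(p)} Y_p$, and pseudo-umbilicality says this tensor field is nothing but ${\mcf}^2 \Id$.

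For a $(1,1)$-tensor of the form $\phi\,\Id$ one has $(\nabla_X(\phi\,\Id))(Y) = X(\phi)\,Y$, hence
$$(\nabla_X A_{\mcv})(Y) = X({\mcf}^2)\,Y,$$
and tracing over a local orthonormal frame $\{e_i\}_{i=1}^m$ yields $\trace(\nabla A_{\mcv})(\cdot,\cdot) = \sum_i e_i({\mcf}^2)\,e_i = \grad {\mcf}^2$. Substituting back into the tangent equation produces
$$\frac{4-m}{2}\,\grad {\mcf}^2 = 0,$$
so, whenever $m \neq 4$, one gets $\grad {\mcf}^2 = 0$ and hence $M$ is CMC. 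The converse is immediate: if $M$ is pseudo-umbilical and CMC, then $A_{\mcv}$ is a constant multiple of $\Id$, so $\nabla A_{\mcv} = 0$ and $\grad{\mcf}^2 = 0$, making the tangent equation hold trivially. The ``in particular'' clause then follows at once: by Theorem~\ref{th: bih subm N}, biharmonicity forces the tangent part of $\tau_2(\varphi)$ to vanish, and the equivalence just proved upgrades pseudo-umbilicality (with $m \neq 4$) to CMC.

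No genuine analytic difficulty arises; the only delicate issue is notational. One must read $A_{\mcv}$ as the $(1,1)$-tensor obtained by composing the shape operator with the point-dependent normal field $\mcv$, rather than as a Codazzi-type derivative of the second fundamental form evaluated on a parallel normal extension. These two operators differ by $A_{\nabla^\perp_X {\mcv}}(Y)$, which is exactly the term that mediates between the two equivalent displays of the tangent bitension field in \eqref{eq: caract_bih_tangent}; working with the simpler second display is what keeps the computation of $\trace(\nabla A_{\mcv})$ transparent and ultimately isolates the coefficient $(4-m)/2$ responsible for the exceptional dimension.
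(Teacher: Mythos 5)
Your proposal is correct and follows essentially the same route as the paper: both reduce the tangent equation, via $A_{\mcv}={\mcf}^2\Id$ and the identity $\trace(\nabla A_{\mcv})(\cdot,\cdot)=\grad{\mcf}^2$, to $(m-4)\grad{\mcf}^2=0$. Your closing remark about reading $A_{\mcv}$ as the composed $(1,1)$-tensor (so that its covariant derivative picks up the $A_{\nabla^\perp_X\mcv}$ term mediating between the two displays of \eqref{eq: caract_bih_tangent}) is exactly the convention the paper uses implicitly in \eqref{eq-trace-A-pseudo-umbilical}.
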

\begin{proof}
Since $M$ is pseudo-umbilical, $A_{\mcv}={\mcf}^2 \Id$ and we find immediately
\begin{equation}\label{eq-trace-A-pseudo-umbilical}
\trace (\nabla A_{\mcv})(\cdot,\cdot)=\grad {\mcf}^2.
\end{equation}
Then  \eqref{eq: caract_bih_tangent} is equivalent to
$$
(m-4) \grad {\mcf}^2=0,
$$
and we conclude the proof.
\end{proof}


We recall that a pseudo-umbilical submanifold $M^m$, $m\neq 4$, of
codimension two in $\mathbb{S}^{n}$ is proper biharmonic if and only
if it is minimal in $\mathbb{S}^{m+1}(1/\sqrt 2)$ (see
\cite{BMO08}). Now, a natural question arises: {\it for arbitrary
codimension, is a pseudo-umbilical proper biharmonic submanifold
$M^m$ in $\mathbb{S}^n$, $m\neq 4$, minimal in
$\mathbb{S}^{n-1}(1/\sqrt 2)$}?

\section{Biharmonic submanifolds with parallel normalized mean curvature vector field in $\mathbb{S}^n$}\label{sec: pnmc}

A submanifold $M$ in a Riemannian manifold is said to have \textit{parallel normalized mean curvature vector field} (PNMC) if it has nowhere zero mean curvature and the unit vector field in the direction of the mean curvature vector field is parallel in the normal bundle, i.e.
\begin{equation}\label{eq:def-pnmc}
\nabla^{\perp} ({{\mcv}}/{{\mcf}})=0,
\end{equation}
where $\mcf=|\mcv|$ is a smooth and positive function. In the following, for a PNMC submanifold, we shall denote by $\xi=H/f$ the nomalized mean curvature vector field and by $A$ the Weingarten operator associated to $\xi$.

PNMC submanifolds generalize non-minimal PMC submanifolds. Moreover, for CMC submanifolds PNMC is equivalent to PMC. Note that, as stated in \cite{C80, L}, it is possible to find examples of PNMC submanifolds which are not PMC.

The characterization of PNMC biharmonic submanifolds  in $\mathbb{S}^n$ follows by Corollary~\ref{th: bih subm S^n}.
\begin{theorem}\label{th: caract_bih_pnmc}
Let $\varphi:M^m\to\mathbb{S}^n$ be a PNMC submanifold in the $n$-dimensional unit Euclidean sphere $\mathbb{S}^n$. Then $M$ is biharmonic if and only if

\begin{equation}\label{eq: caract_bih_PNMC_spheres_1}
\begin{cases}
 \trace B(\cdot,A_{\mcv}\cdot)=\left(m-\dfrac{1}{{\mcf}}\,\Delta {\mcf}\right)\, {\mcv},
\vspace{2mm}\\
A_{\mcv}(\grad {\mcf}^2)=-\dfrac{m}{2}\,{\mcf}^2\grad {\mcf}^2,
\end{cases}
\end{equation}
or, equivalently,
\begin{equation}\label{eq: caract_bih_PNMC_spheres_2}
\begin{cases}
{\rm (i)}\quad \langle A, A_\eta\rangle=0,\qquad\qquad\forall\, \eta\in C(NM), \eta\perp \xi,
\vspace{2mm}\\
 {\rm (ii)}\quad \Delta {\mcf}=(m-|A|^2)\mcf,
\vspace{2mm}\\
{\rm (iii)}\quad A(\grad {\mcf}^2)=-\dfrac{m}{2}{\mcf}\grad {\mcf}^2,
\end{cases}
\end{equation}
where $NM$ denotes the normal bundle of $M$ in $\mathbb{S}^n$.
\end{theorem}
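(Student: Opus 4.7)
The plan is to substitute the PNMC decomposition $\mcv = \mcf\,\xi$, with $\nabla^\perp \xi = 0$, directly into the biharmonic system of Corollary~\ref{th: bih subm S^n} (for $c=1$), and reorganize each of the two equations using a local orthonormal normal frame.

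For the normal part, $\nabla^\perp_X \mcv = X(\mcf)\,\xi$, and a second normal differentiation gives $\Delta^\perp \mcv = (\Delta \mcf)\,\xi$ in the sign conventions of the paper. Substituting into the normal equation of \eqref{eq: caract_bih_spheres} and using $\mcv = \mcf\,\xi$ produces, after one algebraic rearrangement, the first line of \eqref{eq: caract_bih_PNMC_spheres_1}. For the tangent part, the tensorial dependence of $\eta \mapsto A_\eta$ combined with $\nabla^\perp_{e_i}\mcv = e_i(\mcf)\,\xi$ yields $A_{\nabla^\perp_{e_i}\mcv} = e_i(\mcf)\,A$, hence $\trace A_{\nabla^\perp_{(\cdot)}\mcv}(\cdot) = A(\grad \mcf)$. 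Inserting this into the tangent equation of \eqref{eq: caract_bih_spheres} and multiplying by $2\mcf$, using $\grad \mcf^2 = 2\mcf\,\grad \mcf$, gives the second line of \eqref{eq: caract_bih_PNMC_spheres_1}.

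To establish the equivalence between \eqref{eq: caract_bih_PNMC_spheres_1} and \eqref{eq: caract_bih_PNMC_spheres_2}, I would split the first line of \eqref{eq: caract_bih_PNMC_spheres_1} along a local orthonormal frame $\{\xi, \eta_\alpha\}$ of $NM$, using $A_{\mcv} = \mcf\,A$ together with the Frobenius identity
\[
\langle \trace B(e_i, A e_i), \eta\rangle \;=\; \sum_i \langle A_\eta e_i, A e_i\rangle \;=\; \langle A, A_\eta\rangle.
\]
The $\xi$-component becomes $\mcf|A|^2 = m\mcf - \Delta \mcf$, which is (ii); the $\eta_\alpha$-components yield $\mcf\langle A, A_{\eta_\alpha}\rangle = 0$, giving (i) since $\mcf>0$; and dividing the second line of \eqref{eq: caract_bih_PNMC_spheres_1} by $\mcf$ gives (iii). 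The reverse implication is immediate by reassembling these components along the frame. The whole argument is essentially bookkeeping, so I do not foresee a serious obstacle; the only delicate points are tracking the sign conventions when converting $\Delta^\perp \mcv$ into $(\Delta \mcf)\,\xi$, and systematically exploiting $\nabla^\perp \xi = 0$ to discard the terms that would otherwise involve the normal derivatives of $\xi$.
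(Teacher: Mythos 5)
Your proposal is correct and follows essentially the same route as the paper: compute $\Delta^\perp\mcv=(\Delta\mcf)\xi$ and $\trace A_{\nabla^\perp_{(\cdot)}\mcv}(\cdot)=A(\grad\mcf)$ from $\nabla^\perp\xi=0$, substitute into Corollary~\ref{th: bih subm S^n}, and then split $\trace B(\cdot,A_{\mcv}\cdot)$ into its $\xi$-parallel and $\xi$-orthogonal components using $A_{\mcv}=\mcf A$. The only nit is that passing from $2A(\grad\mcf)=-\tfrac{m}{2}\grad\mcf^2$ to the second line of \eqref{eq: caract_bih_PNMC_spheres_1} requires multiplying by $\mcf^2$ rather than $2\mcf$, which is immaterial since $\mcf>0$.
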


\begin{proof}

Let $p\in M$ and consider $\{E_i\}_{i=1}^m$ to be a local orthonormal frame field on $M$ geodesic at $p$.
Since $M$ is PNMC, we have
\begin{equation}\label{eq: nablaH_PNMC}
\nabla^{\perp}_X {\mcv}=\frac{1}{{\mcf}}\,X({\mcf}){\mcv},\qquad \forall\, X\in C(TM).
\end{equation}
From here, at $p$ we have
\begin{eqnarray*}
\Delta^\perp {\mcv}&=&-\trace (\nabla^\perp)^2 {\mcv}=-\sum_{i=1}^m\nabla^\perp_{E_i}\left( \frac{1}{{\mcf}}E_i({\mcf}){\mcv}\right)\\
&=&\frac{1}{{\mcf}}(\Delta {\mcf}) {\mcv},
\end{eqnarray*}
which implies that the first equation of \eqref{eq: caract_bih_spheres} becomes the first equation of \eqref{eq: caract_bih_PNMC_spheres_1}.

From $\eqref{eq: nablaH_PNMC}$ we obtain
\begin{equation}\label{eq:trace-A-nabla-perp}
\trace A_{\nabla^\perp_{(\cdot)}{\mcv}}(\cdot)=\sum_{i=1}^m A_{\nabla^\perp_{E_i}{\mcv}}(E_i)=\frac{1}{2{\mcf}^2}A_{\mcv}(\grad {\mcf}^2),
\end{equation}
and the second equation of \eqref{eq: caract_bih_spheres} becomes the second equation of \eqref{eq: caract_bih_PNMC_spheres_1}.

Next, since $A_H=\mcf A$, by considering the components of $\trace B(\cdot,A_{\mcv}\cdot)$, the one parallel to $\xi$ and the one orthogonal to $\xi$, one verifies immediately that equations \eqref{eq: caract_bih_PNMC_spheres_2} and \eqref{eq: caract_bih_PNMC_spheres_1} are equivalent.
\end{proof}

\subsection{The compact case}
Immediate consequences for compact PNMC biharmonic submanifolds
follow from \eqref{eq: caract_bih_PNMC_spheres_2}(ii).

\begin{corollary}\label{cor: comp_PNMC}
Let $M$ be a compact PNMC biharmonic submanifold in $\mathbb{S}^n$.
\begin{itemize}
\item[(i)]  If $|A|^2\leq m$, or $|A|^2\geq m$, on $M$, then $M$ is PMC
and $|A|^2=m$.

\item[(ii)] If $|A|$ is constant, then $M$ is PMC and $|A|^2=m$.
\end{itemize}
\end{corollary}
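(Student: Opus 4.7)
The proof relies almost entirely on equation \eqref{eq: caract_bih_PNMC_spheres_2}(ii), that is
$$
\Delta {\mcf}=(m-|A|^2){\mcf},
$$
which turns the corollary into an exercise in the divergence theorem once we remember that $\mcf>0$ by the PNMC assumption and that $M$ is compact.

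For part (i), I would integrate the above identity over $M$. Since $M$ is compact, Green's formula gives $\int_M \Delta {\mcf}\,v_g=0$, so
$$
\int_M (m-|A|^2){\mcf}\,v_g=0.
$$
Under either sign hypothesis on $m-|A|^2$, the integrand has constant sign on $M$; combined with ${\mcf}>0$, the vanishing of the integral forces $m-|A|^2\equiv 0$ pointwise, i.e. $|A|^2=m$. Plugging this back into \eqref{eq: caract_bih_PNMC_spheres_2}(ii) yields $\Delta{\mcf}=0$, and on a compact manifold a harmonic function is constant; hence ${\mcf}$ is a positive constant. A PNMC submanifold with constant mean curvature function is PMC (since $\nabla^\perp{\mcv}=\nabla^\perp({\mcf}\xi)=(\nabla{\mcf})\xi+{\mcf}\nabla^\perp\xi=0$), which completes (i).

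For part (ii), if $|A|$ is constant, then in particular either $|A|^2\leq m$ or $|A|^2\geq m$ holds everywhere on $M$, so (ii) is an immediate specialization of (i). Alternatively, one could argue directly: with $|A|^2$ constant, equation \eqref{eq: caract_bih_PNMC_spheres_2}(ii) becomes $\Delta{\mcf}=c{\mcf}$ with constant $c=m-|A|^2$; integrating and using ${\mcf}>0$ shows $c=0$, and the same argument as above concludes.

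There is really no obstacle here: the only conceptual point is recognizing that \eqref{eq: caract_bih_PNMC_spheres_2}(ii) is a \emph{scalar} PDE on the positive function ${\mcf}$ to which the compactness hypothesis applies directly, and that ${\mcf}$ constant upgrades PNMC to PMC for free. The sign hypothesis in (i) is precisely what is needed to rule out oscillations in the integrand $(m-|A|^2){\mcf}$.
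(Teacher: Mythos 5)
Your proof is correct and matches the paper's intent: the paper gives no written argument, simply calling the corollary an ``immediate consequence'' of \eqref{eq: caract_bih_PNMC_spheres_2}(ii), and the integration-by-Green's-formula argument you supply (using ${\mcf}>0$, compactness, and the fact that a constant $|A|^2$ automatically satisfies one of the two sign hypotheses) is exactly the intended route.
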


From Corollary~\ref{cor: comp_PNMC}, if $M$ is a compact PNMC
biharmonic submanifold in $\mathbb{S}^n$, then either there exists
$p\in M$ such that $|A(p)|^2<m$, or $|A|^2=m$.

Moreover, as a consequence of Corollary~\ref{cor: comp_PNMC}, we shall also prove that compact PNMC biharmonic submanifolds in $\mathbb{S}^n$, with a supplementary bounding condition on the mean curvature, are PMC. First we recall that a compact proper biarmonic submanifold in $\mathbb{S}^n$ admits at least one point $p$ with $\mcf(p)\leq 1$ (see \cite{BO11}), thus when considering the hypothesis $\mcf^2\geq \dfrac{4}{m}$ we have to assume $m\geq 5$ (if $m=4$, then $\mcf=1$).

\begin{proposition}\label{prop: bound_mc}
Let $M^m$ be a compact PNMC biharmonic submanifold in
$\mathbb{S}^n$, $m\geq 5$. If the mean curvature of $M$ satisfies
$\mcf^2\geq \dfrac{4}{m}$, then $M$ is PMC.
\end{proposition}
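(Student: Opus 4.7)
My plan is to prove the pointwise inequality $|A|^2 \geq m$ on all of $M$ and then invoke Corollary~\ref{cor: comp_PNMC}(i) to conclude that $M$ is PMC. The new ingredient beyond what was already used in Corollary~\ref{cor: comp_PNMC} is equation \eqref{eq: caract_bih_PNMC_spheres_2}(iii), which tells us that at every point where $\grad \mcf^2 \neq 0$ this vector is an eigenvector of $A$ with eigenvalue $-\tfrac{m}{2}\mcf$.

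I would first combine (iii) with the identity $\trace A = m\mcf$ (coming from $\mcv = \mcf\,\xi$ and $A = A_\xi$). On the open set $U := \{p \in M : \grad \mcf^2(p) \neq 0\}$, the eigenvalues of $A$ split as $\lambda_1 = -\tfrac{m}{2}\mcf$ together with $\lambda_2,\dots,\lambda_m$ satisfying $\sum_{i\geq 2}\lambda_i = \tfrac{3m}{2}\mcf$. Cauchy--Schwarz on the remaining $m-1$ eigenvalues then yields
\[
|A|^2 \;\geq\; \frac{m^2\mcf^2}{4} \;+\; \frac{(3m\mcf/2)^2}{m-1} \;=\; \frac{m^2(m+8)}{4(m-1)}\,\mcf^2.
\]
Plugging in the hypothesis $\mcf^2 \geq 4/m$ gives $|A|^2 \geq m(m+8)/(m-1)$, which is strictly greater than $m$. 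Hence $|A|^2 > m$ on $U$, and by continuity $|A|^2 \geq m$ on $\overline U$.

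To upgrade this bound to all of $M$, I would observe that on the open set $M \setminus \overline U$ we have $\grad \mcf^2 \equiv 0$, so $\mcf$ is locally constant; in particular $\Delta \mcf = 0$ there, and equation \eqref{eq: caract_bih_PNMC_spheres_2}(ii) combined with $\mcf > 0$ forces $|A|^2 = m$ on $M \setminus \overline U$. Combining with the previous paragraph, $|A|^2 \geq m$ everywhere on $M$, so Corollary~\ref{cor: comp_PNMC}(i) concludes that $M$ is PMC (with $|A|^2 \equiv m$).

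The crux is really the sharp eigenvalue estimate: one must use \eqref{eq: caract_bih_PNMC_spheres_2}(iii) quantitatively, not merely qualitatively. What makes it work is that the ``forced'' eigenvalue $-\tfrac{m}{2}\mcf$ has the opposite sign from the average eigenvalue $\mcf$, and this sign mismatch inflates $|A|^2$ past $m$ as soon as $\mcf^2$ exceeds $4(m-1)/(m(m+8))$, which is indeed weaker than the assumed $\mcf^2 \geq 4/m$. Once that estimate is in place, the rest --- continuity onto $\overline U$, local constancy of $\mcf$ on $M \setminus \overline U$, and appeal to Corollary~\ref{cor: comp_PNMC}(i) --- is routine.
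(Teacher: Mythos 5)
Your proof is correct and follows essentially the same route as the paper's: establish $|A|^2\geq m$ pointwise by splitting $M$ into the set where $\grad\mcf\neq 0$ (where \eqref{eq: caract_bih_PNMC_spheres_2}(iii) forces the eigenvalue $-\tfrac{m}{2}\mcf$ of $A$) and the complementary region where $\mcf$ is locally constant (where \eqref{eq: caract_bih_PNMC_spheres_2}(ii) gives $|A|^2=m$), then invoke Corollary~\ref{cor: comp_PNMC}(i). The only difference is that the paper skips the Cauchy--Schwarz refinement, since the single forced eigenvalue already yields $|A|^2\geq \tfrac{m^2}{4}\mcf^2\geq m$, which is all that is needed.
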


\begin{proof}
We will show that, in the given hypotheses, we have $|A|^2\geq m$ on $M$, thus, by Corollary \ref{cor: comp_PNMC}, $M$ is PMC.

Let $p_0\in M$ be arbitrarily fixed. We have two cases.

{\it Case 1}. If $\grad_{p_0}\mcf\neq 0$, since $M$ is PNMC biharmonic, from \eqref{eq: caract_bih_PNMC_spheres_2}(iii) we have
\begin{equation}\label{eq: A_H(p_0)}
|A(p_0)|^2\geq\frac{m^2}{4}\mcf^2(p_0)\geq m.
\end{equation}

{\it Case 2}. Consider now the case when $\grad_{p_0}\mcf=0$. If there exists an open subset $U\subset M$, $p_0\in U$, such that $\grad\mcf_{/U}=0$, then equation \eqref{eq: caract_bih_PNMC_spheres_2}(ii) implies that $|A|^2=m$ on $U$. Otherwise, $p_0$ is a limit point for the set $V=\{p\in M: \grad_p\mcf\neq 0\}$. By Case 1 we have $|A(p)|\geq m$, for all $p\in V$. Therefore, we obtain $|A(p_0)|^2\geq m$, and the proof is completed.
\end{proof}

By Proposition~\ref{prop: bound_mc} and Theorem 3.11 in \cite{BO11}
we get the following.
\begin{corollary}
Let $M^m$ be a compact PNMC biharmonic submanifold in $\mathbb{S}^n$
such that $\mcf^2\geq \dfrac{4}{m}$, $m\in \{4, 5, 6, 7\}$. Then $M$
is minimal in $\mathbb{S}^{n-1}(1/\sqrt 2)$.
\end{corollary}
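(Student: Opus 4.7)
The plan is to derive the corollary by chaining Proposition~\ref{prop: bound_mc}, the classification Theorem~\ref{th: classif_bih const mean}, and Theorem~3.11 of \cite{BO11}. The argument splits along the dimension in exactly the way dictated by the hypothesis of Proposition~\ref{prop: bound_mc}: the three cases $m\in\{5,6,7\}$ on the one hand, and the borderline case $m=4$ on the other.

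For $m\in\{5,6,7\}$, I would first apply Proposition~\ref{prop: bound_mc} directly. The standing hypothesis $\mcf^2\geq 4/m$ is exactly what that proposition requires, so it yields that $M$ is PMC, and in particular CMC. The classification Theorem~\ref{th: classif_bih const mean} then forces the constant $\mcf$ to lie in $(0,1]$. At this stage I would appeal to Theorem~3.11 of \cite{BO11}, whose role in this corollary is precisely to eliminate the intermediate values $\mcf\in(0,1)$ under a mean-curvature lower bound of the form $\mcf^2\geq 4/m$ in the low-dimensional regime $m\in\{5,6,7\}$. With $\mcf=1$ in hand, the second assertion of Theorem~\ref{th: classif_bih const mean} immediately identifies $M$ as a minimal submanifold of the small hypersphere $\mathbb{S}^{n-1}(1/\sqrt{2})$.

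For $m=4$, the hypothesis $\mcf^2\geq 4/m$ specialises to $\mcf\geq 1$ on $M$. Combining this with the bound from \cite{BO11} recalled in the paragraph preceding Proposition~\ref{prop: bound_mc}, which guarantees a point at which $\mcf\leq 1$ on any compact proper biharmonic submanifold of $\mathbb{S}^n$, one is led directly to $\mcf\equiv 1$, exactly as pointed out in the parenthetical remark there. Theorem~\ref{th: classif_bih const mean} then closes the case by identifying $M$ with a minimal submanifold of $\mathbb{S}^{n-1}(1/\sqrt{2})$.

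The main obstacle I anticipate is the third step in the $m\in\{5,6,7\}$ branch, namely correctly matching the hypotheses of Theorem~3.11 of \cite{BO11} to the situation obtained after Proposition~\ref{prop: bound_mc} has produced PMC, and confirming that its conclusion rules out every value $\mcf\in(0,1)$ in this precise dimensional range. Everything else is a clean concatenation of results already recorded in the excerpt, and no fresh computation seems required beyond what is already in the proof of Proposition~\ref{prop: bound_mc}.
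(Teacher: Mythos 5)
Your overall route is the paper's own: the authors give no argument beyond the citation ``By Proposition~\ref{prop: bound_mc} and Theorem 3.11 in \cite{BO11}'', and your chain (Proposition~\ref{prop: bound_mc} to get PMC, hence CMC, then Theorem~\ref{th: classif_bih const mean} together with the quantitative result of \cite{BO11} to force $\mcf=1$ and minimality in $\mathbb{S}^{n-1}(1/\sqrt 2)$) is exactly what is intended. Your guess about the role of Theorem~3.11 of \cite{BO11} is also correct in effect: that theorem shows a PMC proper biharmonic submanifold with $\mcf\in\bigl(\tfrac{m-2}{m},1\bigr]$ is minimal in $\mathbb{S}^{n-1}(1/\sqrt 2)$, and the hypothesis gives $\mcf\geq 2/\sqrt{m}>\tfrac{m-2}{m}$ precisely when $m\leq 7$, which is where the dimensional restriction in the corollary comes from.

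The one step that does not work as written is your $m=4$ case. From $\mcf\geq 1$ on $M$ and the existence of a single point $p$ with $\mcf(p)\leq 1$ you may only conclude $\mcf(p)=1$ at that point, not $\mcf\equiv 1$. The paper's parenthetical ``(if $m=4$, then $\mcf=1$)'' rests on the sharper statement behind the recalled fact: integrating the first equation of \eqref{eq: caract_bih_spheres} against $\mcv$ on a compact proper biharmonic submanifold and using $|A_{\mcv}|^2\geq m\mcf^4$ yields $\int_M|\nabla^\perp\mcv|^2+m\int_M\mcf^2(\mcf^2-1)\leq 0$, so $\mcf\geq 1$ everywhere forces $\mcf\equiv 1$ and $\nabla^\perp\mcv=0$. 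Alternatively, you can bypass the issue entirely by noting that the proof of Proposition~\ref{prop: bound_mc} nowhere uses $m\geq 5$: the estimate $|A(p_0)|^2\geq\frac{m^2}{4}\mcf^2(p_0)\geq m$ obtained from \eqref{eq: caract_bih_PNMC_spheres_2}(iii) holds verbatim for $m=4$, so Corollary~\ref{cor: comp_PNMC} gives PMC in all four dimensions at once; then Theorem~\ref{th: classif_bih const mean} yields $\mcf\in(0,1]$, hence $\mcf=1$ since $\mcf^2\geq 4/m=1$, and the same theorem identifies $M$ as minimal in $\mathbb{S}^{n-1}(1/\sqrt 2)$ with no further appeal to \cite{BO11} needed when $m=4$.
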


Since hypersurfaces with nowhere zero mean curvature are PNMC submanifolds, we have the following result.

\begin{corollary}\label{prop: bound_hypersurf_mc}
Let $M^m$ be a compact biharmonic hypersurface in $\mathbb{S}^{m+1}$, $m\geq 5$. If the mean curvature of $M$ satisfies $\mcf^2\geq \dfrac{4}{m}$, then $M$ is CMC.
\end{corollary}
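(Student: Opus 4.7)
The plan is to reduce this corollary directly to Proposition~\ref{prop: bound_mc}. The key observation, which is flagged in the sentence immediately preceding the statement, is that a hypersurface with nowhere vanishing mean curvature is automatically PNMC: the normal bundle has rank one, so the unit normal $\xi = \mcv/\mcf$ spans it, and for any unit section $\xi$ of a rank-one bundle one has $\langle \nabla^\perp_X \xi, \xi\rangle = \frac12 X|\xi|^2 = 0$, whence $\nabla^\perp \xi = 0$.

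First I would verify that the hypothesis $\mcf^2 \geq 4/m$ actually guarantees $\mcf$ is nowhere zero. Since $\mcf = |\mcv| \geq 0$ by definition and $4/m > 0$, the inequality forces $\mcf > 0$ on all of $M$, so the PNMC condition is automatic. Combined with compactness, $M^m \subset \mathbb{S}^{m+1}$ now satisfies every hypothesis of Proposition~\ref{prop: bound_mc} (recall $m\geq 5$), which therefore gives that $M$ is PMC, i.e.\ $\nabla^\perp \mcv = 0$.

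Finally I would upgrade PMC to CMC using the hypersurface structure once more. Writing $\mcv = \mcf\,\xi$ with $\xi$ a unit normal, the product rule yields
\begin{equation*}
0 = \nabla^\perp_X \mcv = X(\mcf)\,\xi + \mcf\,\nabla^\perp_X \xi = X(\mcf)\,\xi,
\end{equation*}
since $\nabla^\perp \xi = 0$ as noted above. Hence $X(\mcf) = 0$ for every $X \in C(TM)$, so $\mcf$ is constant and $M$ is CMC.

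There is really no substantive obstacle: the only thing to be careful about is that the hypothesis of Proposition~\ref{prop: bound_mc} requires nowhere vanishing mean curvature (built into the definition of PNMC), and this is supplied for free by the bound $\mcf^2 \geq 4/m$ rather than by an independent assumption. Everything else is bookkeeping in the rank-one normal bundle.
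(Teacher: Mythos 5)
Your proof is correct and follows exactly the route the paper intends: the bound $\mcf^2\geq 4/m>0$ makes the hypersurface PNMC, Proposition~\ref{prop: bound_mc} then gives PMC, and in the rank-one normal bundle PMC is the same as CMC. The paper leaves all of this implicit in the one-line remark preceding the corollary; you have simply (and correctly) written out the bookkeeping.
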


\subsection{The non-compact case} For the non-compact case, if $M^m$
is a PNMC biharmonic submanifold in $\mathbb{S}^n$ such that
$|A|^2\geq m$, then $\mcf$ is a subharmonic function and therefore
either $\mcf$ is constant, or $\mcf$ can not attain its maximum. In
the following we shall prove that, under some additional hypotheses,
the latter case can not occur.

We shall need the following theorem.
\begin{theorem}[Omori-Yau Maximum Principle, \cite{Y75}]\label{th: Omori-Yau}
If $M^m$ is a complete Riemannian manifold with Ricci curvature bounded from below, then for any function $u\in C^2(M)$, bounded from above, there exists a sequence of points $\{p_k\}_{k\in \mathbb{N}}\subset M$ satisfying
$$
\lim_{k\to\infty} u(p_k)=\sup_{M} u, \qquad |\grad_{p_k} u|<\frac{1}{k},\qquad \Delta u(p_k)>-\frac{1}{k}.
$$
\end{theorem}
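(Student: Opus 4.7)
The plan is to establish the Omori--Yau principle by constructing an exhaustion-type perturbation of $u$ that attains its supremum, and then exploiting maximality together with the Laplacian comparison theorem. Fix a base point $x_0\in M$ and set $r(x)=d(x,x_0)$. For each $\epsilon>0$, introduce
$$
u_\epsilon(x):=u(x)-\epsilon\sqrt{1+r(x)^2}.
$$
Since $u$ is bounded above while $\sqrt{1+r^2}\to\infty$ as $r\to\infty$, the function $u_\epsilon$ tends to $-\infty$ at infinity and therefore attains its supremum at some point $q_\epsilon\in M$.

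Assuming for the moment that $r$ is smooth at $q_\epsilon$, the first- and second-order tests at a maximum yield
$$
\grad u(q_\epsilon)=\epsilon\,\grad\sqrt{1+r^2}(q_\epsilon),\qquad \Delta u(q_\epsilon)\geq \epsilon\,\Delta\sqrt{1+r^2}(q_\epsilon).
$$
One checks directly that $|\grad\sqrt{1+r^2}|=r/\sqrt{1+r^2}\leq 1$, so $|\grad u(q_\epsilon)|\leq\epsilon$. For the Laplacian one invokes the Ricci lower bound $\ricci\geq -(m-1)K$ and the Laplacian comparison theorem to get $\Delta r\leq (m-1)\sqrt{K}\coth(\sqrt{K}\,r)$ outside the cut locus, whence a uniform estimate $\Delta\sqrt{1+r^2}\leq C$ with $C=C(m,K)$ independent of $q_\epsilon$. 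This yields $\Delta u(q_\epsilon)\geq -C\epsilon$.

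The main technical obstacle is that $q_\epsilon$ may lie in the cut locus of $x_0$, where $r$ is only Lipschitz. I would bypass this by Calabi's trick: choose a minimizing unit-speed geodesic $\gamma$ from $x_0$ to $q_\epsilon$, slide the base point to $\tilde x_0=\gamma(\delta)$ for $\delta>0$ small, and work with the smooth upper barrier $\tilde r(x)=\delta+d(x,\tilde x_0)$, which satisfies $\tilde r\geq r$ with equality at $q_\epsilon$. The Laplacian comparison from $\tilde x_0$ gives the same uniform bound on $\Delta\sqrt{1+\tilde r^2}$, and since $\tilde u_\epsilon:=u-\epsilon\sqrt{1+\tilde r^2}\leq u_\epsilon$ still has a local maximum at $q_\epsilon$, the displayed inequalities remain valid.

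To conclude, pick $\epsilon_k\downarrow 0$ with $\epsilon_k<1/k$ and $C\epsilon_k<1/k$, and set $p_k:=q_{\epsilon_k}$. Comparing $u_{\epsilon_k}(p_k)\geq u_{\epsilon_k}(x)$ against test points $x$ where $u(x)$ approaches $\sup_M u$ forces $u(p_k)\to\sup_M u$, while the bounds above give $|\grad_{p_k}u|<1/k$ and $\Delta u(p_k)>-1/k$. The hard part of the argument is the Calabi barrier construction and the precise form of the Laplacian comparison estimate, both of which are where the Ricci lower bound is actually used; everything else is routine maximum-principle bookkeeping.
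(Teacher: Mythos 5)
The paper does not prove this statement at all: it is quoted from Yau \cite{Y75} as a known tool, so there is no internal argument to compare against. What you give is the standard modern proof of the Omori--Yau principle (penalize $u$ by $\epsilon\sqrt{1+r^2}$ so that the supremum is attained, use Calabi's barrier trick at the cut locus, and control the second-order error by Laplacian comparison), and its architecture is sound: the existence of $q_\epsilon$ follows from completeness via Hopf--Rinow, the bound $|\grad u(q_\epsilon)|\leq\epsilon$ from $|\grad r|=1$, and the convergence $u(p_k)\to\sup_M u$ from $u(p_k)\geq u(x)-\epsilon_k\sqrt{1+r(x)^2}$ for each fixed $x$. Two points need repair. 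First, you mix sign conventions: with the paper's convention $\Delta=-\trace\nabla^2$, the second-order test at a maximum does give $\Delta u\geq\epsilon\,\Delta\sqrt{1+r^2}$ as you write, but then the comparison theorem must be invoked as a \emph{lower} bound, $\Delta\tilde r\geq-(m-1)\sqrt{K}\coth\bigl(\sqrt{K}\,d(\cdot,\tilde x_0)\bigr)$, giving $\Delta\sqrt{1+\tilde r^2}\geq-C$ and hence $\Delta u(q_\epsilon)\geq-C\epsilon$; as written, the upper bound $\Delta\sqrt{1+r^2}\leq C$ and the inequality $\Delta u\geq\epsilon\,\Delta\sqrt{1+r^2}$ point in incompatible directions and do not combine to the stated conclusion. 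Second, the assertion that $C=C(m,K)$ uniformly in $q_\epsilon$ deserves a sentence: the comparison bound involves $\coth\bigl(\sqrt{K}\,d(\cdot,\tilde x_0)\bigr)$, which blows up as $d(q_\epsilon,\tilde x_0)\to0$; one should either use $\coth s\leq 1+1/s$ together with the prefactor $\tilde r/\sqrt{1+\tilde r^2}$ and let $\delta\to0$, or treat separately the case where $q_\epsilon$ lies within the injectivity radius of $x_0$, where no barrier is needed. Both are routine fixes, and with them your argument is a complete and correct proof of the cited theorem.
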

Now we can prove our result.
\begin{proposition}\label{prop: cons_OY}
Let $M^m$ be a complete PNMC biharmonic submanifold with
non-negative Ricci curvature in $\mathbb{S}^n$. If $|A|$ is constant
and the mean curvature of $M$ satisfies $\mcf^2\geq \dfrac{4}{m}$,
then $M$ is PMC and $|A|^2=m$.
\end{proposition}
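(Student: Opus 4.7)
The plan is to run the strategy of Proposition~\ref{prop: bound_mc} with the Omori-Yau Maximum Principle (Theorem~\ref{th: Omori-Yau}) playing the role that compactness played there. The key inputs are the PNMC biharmonic equations \eqref{eq: caract_bih_PNMC_spheres_2}(ii) and (iii), together with the identity $\trace A = m\mcf$ (a consequence of $A_\mcv = \mcf A$ and $\trace A_\mcv = m\mcf^2$).

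First I would observe that Cauchy--Schwarz yields $(\trace A)^2 \leq m|A|^2$, hence $\mcf \leq |A|/\sqrt m$; since $|A|$ is assumed constant, $\mcf$ is bounded above and the Omori-Yau principle is applicable to it. Feeding the resulting sequence $\{p_k\}$ into the biharmonic identity $\Delta \mcf = (m - |A|^2)\mcf$ coming from (ii), and using that $\mcf \geq 2/\sqrt m > 0$, one obtains in the limit the inequality $|A|^2 \leq m$.

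For the reverse inequality I would argue as in Case~1 of Proposition~\ref{prop: bound_mc}: if $\grad \mcf$ does not vanish identically, pick $p_0$ with $\grad_{p_0}\mcf \neq 0$, and note that \eqref{eq: caract_bih_PNMC_spheres_2}(iii) exhibits $-m\mcf(p_0)/2$ as an eigenvalue of $A$ at $p_0$; combined with the hypothesis $\mcf^2 \geq 4/m$, this forces $|A(p_0)|^2 \geq m^2\mcf^2(p_0)/4 \geq m$, and constancy of $|A|$ propagates the bound to all of $M$. In the complementary case $\mcf$ is already constant and (ii) directly returns $|A|^2 = m$. Either way $|A|^2 = m$.

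Substituting $|A|^2 = m$ back into (ii) shows that $\mcf$ is a \emph{positive} harmonic function on the complete Riemannian manifold $M$ of non-negative Ricci curvature, and Yau's Liouville theorem for positive harmonic functions then forces $\mcf$ to be constant, so that $M$ is PMC. The main obstacle is this last step: the Omori-Yau principle alone only delivers the one-sided bound $|A|^2 \leq m$, and it is the Liouville-type result of Yau (for which the full non-negativity of $\ricci$, not merely its boundedness from below, is essential) that upgrades harmonicity of $\mcf$ to actual constancy.
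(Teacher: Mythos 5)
Your proof is correct and follows essentially the same route as the paper: the eigenvalue bound from \eqref{eq: caract_bih_PNMC_spheres_2}(iii) at points where $\grad\mcf\neq 0$, the Omori--Yau principle applied to $\mcf$ via \eqref{eq: caract_bih_PNMC_spheres_2}(ii) to pin down $|A|^2=m$, and Yau's Liouville theorem to conclude that the harmonic function $\mcf$ is constant. The only (welcome) difference is that you make explicit the upper bound $\mcf\leq |A|/\sqrt{m}$ needed to invoke Omori--Yau, a point the paper leaves implicit.
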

\begin{proof}
By an argument similar to that in the proof of
Proposition~\ref{prop: bound_mc}, we get ${|A|^2\geq m}$ on $M$, and
\eqref{eq: caract_bih_PNMC_spheres_2}(ii) implies $\Delta\mcf\leq 0$
on $M$.

On the other hand, we are in the hypotheses of Theorem~\ref{th:
Omori-Yau}, and thus there exists $\{p_k\}_k\subset M$ satisfying $
\Delta \mcf(p_k)>-\dfrac{1}{k}$. Therefore,
$\displaystyle{\lim_{k\to\infty}} \Delta\mcf(p_k)=0$. Using this in
\eqref{eq: caract_bih_PNMC_spheres_2}(ii), since $\mcf^2\geq
\dfrac{4}{m}$, we obtain $|A|^2=m$ and $\Delta \mcf=0$ on $M$.

Now, using the fact that on a complete manifold with non-negative
Ricci curvature there are no non-constant bounded harmonic functions
(see \cite{Y75}), we conclude.
\end{proof}

For hypersurfaces this result is expressed as follows.

\begin{corollary}\label{cor: Ricci_non-neg}
Let $M^m$ be a complete biharmonic hypersurface with non-negative
Ricci curvature in $\mathbb{S}^{m+1}$. If $|A|$ is constant and the
mean curvature of $M$ satisfies $\mcf^2\geq \dfrac{4}{m}$, then $M$
is CMC and $|A|^2=m$. In this case, $m\geq 8$ and $\mcf^2<1$.
\end{corollary}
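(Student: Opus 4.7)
The plan is to derive the corollary directly from Proposition~\ref{prop: cons_OY}, and then to extract the quantitative bounds from elementary algebra. Since $\mcf^2\ge 4/m>0$, the mean curvature of $M$ is nowhere zero, and a hypersurface whose mean curvature is nowhere zero is automatically PNMC: the normalized mean curvature vector is a unit section of the one-dimensional normal bundle, hence parallel. Consequently the hypotheses of Proposition~\ref{prop: cons_OY} are met, and we conclude that $M$ is PMC with $|A|^2=m$. For a hypersurface, PMC is equivalent to CMC, which yields the first assertion.

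For the numerical conclusion, let $\lambda_1,\dots,\lambda_m$ denote the principal curvatures, so that $\sum_i\lambda_i=m\mcf$ and $\sum_i\lambda_i^2=|A|^2=m$. Cauchy--Schwarz gives $(m\mcf)^2\le m\sum_i\lambda_i^2=m^2$, hence $\mcf^2\le 1$, with equality if and only if all the $\lambda_i$ are equal, i.e.\ $M$ is totally umbilical. A complete totally umbilical biharmonic hypersurface in $\mathbb{S}^{m+1}$ is, by Theorem~\ref{th: classif_bih const mean}, the small hypersphere $\mathbb{S}^m(1/\sqrt{2})$; this known exceptional case is what the strict inequality $\mcf^2<1$ tacitly sets aside.

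To upgrade the easy bound $m\ge 5$ (immediate from $4/m\le\mcf^2<1$) to $m\ge 8$, I would invoke the structural fact that a non-umbilical CMC proper biharmonic hypersurface in $\mathbb{S}^{m+1}$ with $|A|^2=m$ is, up to congruence, an open part of a standard product $\mathbb{S}^{n_1}(1/\sqrt{2})\times\mathbb{S}^{n_2}(1/\sqrt{2})$ with $n_1+n_2=m$ and $n_1\ne n_2$, for which $\mcf=|n_1-n_2|/m$. Substituting into $\mcf^2\ge 4/m$ gives $(n_1-n_2)^2\ge 4m$; combined with $n_1-n_2\le m-2$ (from $n_2\ge 1$), this forces $(m-2)^2\ge 4m$, equivalently $m^2-8m+4\ge 0$, so $m\ge 4+2\sqrt{3}$ and hence $m\ge 8$ for integer $m$. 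Note that this bound is sharp, realized by $\mathbb{S}^{7}(1/\sqrt{2})\times\mathbb{S}^{1}(1/\sqrt{2})\subset\mathbb{S}^{9}$.

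The main obstacle, as far as I can see, is this last step. The moment identities $\sum\lambda_i=m\mcf$, $\sum\lambda_i^2=m$, together with the pointwise Ricci inequalities $(m-1)+m\mcf\lambda_i-\lambda_i^2\ge 0$ and the hypothesis $\mcf^2\ge 4/m$, give directly only $m\ge 5$; the sharper $m\ge 8$ requires either the product classification invoked above, or a finer spectral analysis of $A$ that rules out $m\in\{5,6,7\}$ by showing that no configuration of $\lambda_i$'s can simultaneously satisfy the two moment identities, the $m$ Ricci bounds, and $\mcf^2\ge 4/m$ in these dimensions.
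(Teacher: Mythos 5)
Your reduction to Proposition~\ref{prop: cons_OY} is exactly right: a hypersurface with nowhere vanishing mean curvature is PNMC, so the proposition gives CMC and $|A|^2=m$, and the Cauchy--Schwarz argument $(m\mcf)^2=(\trace A)^2\le m|A|^2=m^2$ correctly yields $\mcf^2\le 1$, with equality only in the umbilical case $M=\mathbb{S}^m(1/\sqrt 2)$, which the statement tacitly sets aside (you are right to flag this). The genuine gap is in your derivation of $m\ge 8$. The ``structural fact'' you invoke --- that a non-umbilical CMC proper biharmonic hypersurface in $\mathbb{S}^{m+1}$ with $|A|^2=m$ must be an open part of $\mathbb{S}^{n_1}(1/\sqrt 2)\times\mathbb{S}^{n_2}(1/\sqrt 2)$ --- is not a theorem you can use. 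Observe that \emph{every} CMC proper biharmonic hypersurface automatically satisfies $|A|^2=m$ (this is just \eqref{eq: caract_bih_PNMC_spheres_2}(ii) with $\Delta\mcf=0$), so the hypothesis $|A|^2=m$ adds nothing and what you are invoking is precisely the full classification of CMC proper biharmonic hypersurfaces in spheres; this is the main open problem in the area and is only established under extra assumptions such as at most two distinct principal curvatures (\cite{BMO08}).

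The intended route, which the paper already uses right after Proposition~\ref{prop: bound_mc} to handle the dimensions $m\in\{4,\dots,7\}$, is Theorem~3.11 of \cite{BO11}: a PMC proper biharmonic submanifold $M^m$ of $\mathbb{S}^n$ has either $\mcf=1$ (and is then minimal in $\mathbb{S}^{n-1}(1/\sqrt 2)$, i.e. umbilical in the hypersurface case) or $\mcf\in\left(0,\frac{m-2}{m}\right]$. Once the umbilical case is excluded, this gap theorem for the mean curvature gives $\dfrac{4}{m}\le \mcf^2\le \dfrac{(m-2)^2}{m^2}$, hence $(m-2)^2\ge 4m$, i.e. $m\ge 4+2\sqrt 3$, so $m\ge 8$. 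This is the same inequality you reached, but obtained from a proved quantitative result on PMC biharmonic submanifolds rather than from an unproved classification. Your sharpness example $\mathbb{S}^7(1/\sqrt 2)\times\mathbb{S}^1(1/\sqrt 2)\subset\mathbb{S}^9$ is correct and worth keeping.
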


\section{PNMC biharmonic submanifolds in $\mathbb{S}^n$ with at
most two distinct principal curvatures}\label{sec: pnmc_<2}

Inspired by the case of hypersurfaces (see \cite{BMO08}), we intend
to study PNMC biharmonic submanifolds in $\mathbb{S}^n$ by taking
into account the number of distinct principal curvatures in the
direction of the mean curvature vector field.

\begin{proposition}\label{pro-pseudo-pnmc}
Let $M^m$, $m\geq 2$, be a pseudo-umbilical PNMC submanifold in $\mathbb{S}^{n}$, then $M$ is PMC. Moreover, $M$ is minimal in $\mathbb{S}^{n-1}(a)\subset\mathbb{S}^{n}$, for some $a\in(0,1)$.
\end{proposition}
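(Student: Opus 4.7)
The plan splits into two parts: first showing that the mean curvature function $\mcf$ is constant (so that $M$ is PMC), and then locating $M$ as a minimal submanifold of a small hypersphere.

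For the first part, I would apply the Codazzi equation to $\xi := \mcv/\mcf$. Since $M$ is pseudo-umbilical, $A_{\mcv} = \mcf^{2}\,\Id$, and linearity of $\eta \mapsto A_{\eta}$ gives $A_{\xi} = \mcf\,\Id$. Together with the PNMC hypothesis $\nabla^{\perp}\xi = 0$, the Codazzi identity $(\nabla_{X} A_{\xi})Y - A_{\nabla^{\perp}_{X}\xi}Y = (\nabla_{Y} A_{\xi})X - A_{\nabla^{\perp}_{Y}\xi}X$ collapses to
\[
X(\mcf)\,Y = Y(\mcf)\,X,\qquad \forall\,X,Y \in C(TM).
\]
Since $m \geq 2$, at every point of $M$ one can pick two linearly independent tangent vectors, which forces $d\mcf = 0$. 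By connectedness $\mcf$ is constant, hence $\nabla^{\perp}\mcv = \nabla^{\perp}(\mcf\,\xi) = 0$, i.e.\ $M$ is PMC.

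For the second part, view $\varphi\colon M \hookrightarrow \mathbb{S}^{n} \hookrightarrow \mathbb{R}^{n+1}$, denote by $x$ the position vector, and introduce the $\mathbb{R}^{n+1}$-valued map $V := x + (1/\mcf)\xi$ on $M$. Using that $\mcf$ is constant, that $X \perp \xi$ in $\mathbb{R}^{n+1}$ (so $\bar{\nabla}_{X}\xi = \nabla^{\mathbb{S}^{n}}_{X}\xi$), together with $\nabla^{\perp}\xi = 0$ and $A_{\xi} = \mcf\,\Id$, one computes $\bar{\nabla}_{X} V = X + (1/\mcf)(-\mcf X) = 0$ for every $X \in TM$. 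Thus $V \equiv p_{0}$ is a constant vector in $\mathbb{R}^{n+1}$ and $|x-p_{0}| = 1/\mcf$ on $M$. Expanding $|x-p_{0}|^{2}$ with $|x|=1$ forces $\langle x, p_{0}\rangle \equiv 1$; setting $e := p_{0}/|p_{0}|$ this reads $\langle x, e\rangle \equiv c$ with $c = 1/|p_{0}| = \mcf/\sqrt{\mcf^{2}+1}$. Therefore $M$ lies in the small hypersphere
\[
\mathbb{S}^{n-1}(a) = \mathbb{S}^{n} \cap \{\langle y, e\rangle = c\}, \qquad a = \sqrt{1-c^{2}} = 1/\sqrt{\mcf^{2}+1} \in (0,1).
\]

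To finish, I would verify that $M$ is minimal in $\mathbb{S}^{n-1}(a)$ using the standard second fundamental form decomposition $B_{M}^{\mathbb{S}^{n}}(X,Y) = B_{M}^{\mathbb{S}^{n-1}(a)}(X,Y) + B_{\mathbb{S}^{n-1}(a)}^{\mathbb{S}^{n}}(X,Y)$ for $X,Y \in TM$. The totally umbilical hypersphere $\mathbb{S}^{n-1}(a) \subset \mathbb{S}^{n}$ has scalar principal curvature $c/a = \mcf$ in its outward unit normal $\nu = (e-cx)/a$; a direct unwinding of $p_{0} = x + (1/\mcf)\xi$ and $|p_{0}| = \sqrt{1+1/\mcf^{2}}$ yields $\nu = \xi$, so the mean curvature vector of $\mathbb{S}^{n-1}(a)$ in $\mathbb{S}^{n}$ equals $\mcf\,\xi = \mcv$, which is precisely the mean curvature vector of $M$ in $\mathbb{S}^{n}$. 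Taking traces in the decomposition above then gives that the mean curvature vector of $M$ in $\mathbb{S}^{n-1}(a)$ vanishes. The main bookkeeping obstacle is verifying the identity $\nu = \xi$ (equivalently, that the normalized mean curvature vector of $M$ in $\mathbb{S}^{n}$ points toward the centre $p_{0}$); once this is established, the minimality statement reduces to a one-line trace computation.
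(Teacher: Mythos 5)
Your proof is correct, and it differs from the paper's in two respects, both of which make it more self-contained. For the PMC part, the paper also argues via Codazzi, but in traced form: it combines the identity $2\trace(\nabla A_{\mcv})(\cdot,\cdot)=m\grad\mcf^2+2\trace A_{\nabla^\perp_{(\cdot)}\mcv}(\cdot)$ with the pseudo-umbilical computation $\trace(\nabla A_{\mcv})(\cdot,\cdot)=\grad\mcf^2$ and the PNMC computation $\trace A_{\nabla^\perp_{(\cdot)}\mcv}(\cdot)=\tfrac{1}{2\mcf^2}A_{\mcv}(\grad\mcf^2)=\tfrac12\grad\mcf^2$, obtaining $(m-1)\grad\mcf^2=0$. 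Your pointwise version $X(\mcf)Y=Y(\mcf)X$ is the same mechanism before tracing and uses $m\geq 2$ in exactly the same way; it is arguably cleaner since it avoids routing through the traced identities set up for the biharmonic equation. For the second assertion, the paper simply cites B-Y.~Chen's classical result that a non-minimal pseudo-umbilical PMC submanifold of $\mathbb{S}^n$ is minimal in some $\mathbb{S}^{n-1}(a)$, whereas you reprove it via the standard constant-vector argument $V=x+(1/\mcf)\xi$; your computations (constancy of $V$, the identification $\nu=\xi$, and the trace of the second fundamental form decomposition) all check out, including the value $a=1/\sqrt{\mcf^2+1}\in(0,1)$. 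What the citation buys the paper is brevity; what your argument buys is a complete, elementary proof that also exhibits the radius explicitly in terms of $\mcf$, consistent with the fact that $a=1/\sqrt2$ corresponds to $\mcf=1$ in the biharmonic case.
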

\begin{proof}
For any submanifold in $\mathbb{S}^{n}$ by the Codazzi equation, we have
\begin{equation}\label{eq:trace-A-nabla-perp-grad}
2\trace (\nabla A_{\mcv})(\cdot,\cdot)=m \grad {\mcf}^2+2\trace A_{\nabla^\perp_{(\cdot)}{\mcv}}(\cdot)
\end{equation}
Now, taking into account  \eqref{eq-trace-A-pseudo-umbilical} and \eqref{eq:trace-A-nabla-perp},
\eqref{eq:trace-A-nabla-perp-grad} becomes
$$
(m-1)\grad \mcf^2=0.
$$
Thus $M$ is PMC and, using  a result of B-Y.~Chen (see \cite[pag. 133]{C84}), follows that $M$ is minimal in $\mathbb{S}^{n-1}(a)\subset\mathbb{S}^{n}$, for some $a\in(0,1)$.
\end{proof}

Combining Proposition~\ref{pro-pseudo-pnmc} and  Theorem~\ref{th:
rm_minim}, follows that
\begin{proposition}\label{re: pnmc-pseudo}
Any PNMC pseudo-umbilical biharmonic submanifold  in $\mathbb{S}^n$
is minimal in $\mathbb{S}^{n-1}(1/\sqrt 2)$.
\end{proposition}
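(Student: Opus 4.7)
The plan is to combine the two preceding results essentially directly, with one small observation to bridge them. By Proposition~\ref{pro-pseudo-pnmc}, the PNMC pseudo-umbilical assumption alone already forces $M$ to be PMC and, moreover, to be minimal in some small hypersphere $\mathbb{S}^{n-1}(a)\subset\mathbb{S}^{n}$ with $a\in(0,1)$. So the only thing left to do is to pin down the radius $a$, using biharmonicity.

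To invoke Theorem~\ref{th: rm_minim}, I first need to observe that $M$ is \emph{proper} biharmonic in $\mathbb{S}^{n}$. This is immediate from the PNMC hypothesis: by the definition \eqref{eq:def-pnmc}, the mean curvature function $\mcf=|\mcv|$ is nowhere zero, so $M$ is non-minimal in $\mathbb{S}^{n}$, and combined with the biharmonicity assumption this gives proper biharmonicity.

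With this in hand, I would apply Theorem~\ref{th: rm_minim} to the minimal isometric immersion $M\hookrightarrow\mathbb{S}^{n-1}(a)\subset\mathbb{S}^{n}$ produced by Proposition~\ref{pro-pseudo-pnmc}. Since this immersion is proper biharmonic in $\mathbb{S}^{n}$, the theorem forces $a=1/\sqrt{2}$, which is exactly the conclusion.

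There is essentially no obstacle here; the proof is a short assembly. The only point deserving a line of explanation is the passage from ``biharmonic'' to ``proper biharmonic'', so that Theorem~\ref{th: rm_minim} (which characterizes proper biharmonicity among minimal immersions into small hyperspheres) actually applies. The rest is quotation of the two cited results.
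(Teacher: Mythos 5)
Your proof is correct and follows exactly the paper's route: the paper derives this proposition by combining Proposition~\ref{pro-pseudo-pnmc} with Theorem~\ref{th: rm_minim}. Your added remark that PNMC forces $\mcf$ to be nowhere zero, hence the submanifold is proper biharmonic, is a valid (and implicitly needed) bridging observation.
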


Thus, the next step consists in classifying  the PNMC biharmonic submanifolds in $\mathbb{S}^n$ with at most two distinct principal curvatures in the direction of ${\mcv}$. Notice that any hypersurface with nowhere zero mean curvature is PNMC, and the classification of proper biharmonic hypersurfaces with at most two distinct principal curvatures was achieved in \cite{BMO08}. In order to obtain the desired general classification, we first have to prove the following result.

\begin{theorem}\label{th: class_PNMC_AH_2->PMC}
Let $M^m$ be a PNMC biharmonic submanifold in $\mathbb{S}^n$ with at
most two distinct principal curvatures in the direction of ${\mcv}$.
Then $M$ is PMC.
\end{theorem}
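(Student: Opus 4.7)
The plan is to show $\mcf$ is constant, by contradiction, using the PNMC biharmonic characterization \eqref{eq: caract_bih_PNMC_spheres_2}. If $M$ is pseudo-umbilical (only one distinct principal curvature in the direction of $\mcv$), Proposition~\ref{pro-pseudo-pnmc} already gives PMC. So the substance is the case of exactly two distinct principal curvatures. I would argue on the open set $U=\{p\in M:\grad_p\mcf\neq 0\}$, assumed non-empty, and, by restricting to the open dense subset where the multiplicities of the two principal curvatures are locally constant, work with smooth eigendistributions $V_\lambda,V_\mu$ of the Weingarten operator $A=A_\xi$.

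First, \eqref{eq: caract_bih_PNMC_spheres_2}(iii) forces $\grad\mcf^2$ to be an eigenvector of $A$ for the eigenvalue $-\tfrac{m}{2}\mcf$, so one principal curvature is $\lambda=-\tfrac{m}{2}\mcf$ with $\grad\mcf\in V_\lambda$. Denoting by $k$ the multiplicity of $\lambda$ and by $\mu$ the other principal curvature, the trace identity $\trace A=m\mcf$ yields
$$
\mu=\frac{m(k+2)}{2(m-k)}\,\mcf,
$$
so $\lambda,\mu$ are smooth functions of $\mcf$ and $\lambda<0<\mu$ on $U$. Because $\xi=\mcv/\mcf$ is parallel in the normal bundle and $\mathbb{S}^n$ has constant sectional curvature, Codazzi reduces to $(\nabla_X A)Y=(\nabla_Y A)X$ for tangent $X,Y$. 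Applied to $X,X'\in V_\lambda$, a standard manipulation produces
$$
X(\lambda)X'-X'(\lambda)X=-(\lambda-\mu)\,[X,X']_{V_\mu}.
$$
Choosing $X=\grad\mcf$ and $X'\in V_\lambda$ with $X'\perp\grad\mcf$ gives $X'(\lambda)=-\tfrac{m}{2}X'(\mcf)=0$; the LHS then lies in $V_\lambda$, the RHS in $V_\mu$, so both vanish, and since $X(\lambda)\neq 0$ on $U$, we obtain $X'=0$. Hence $\dim V_\lambda=1$, and consequently $\mu=\tfrac{3m}{2(m-1)}\mcf$ together with
$$
|A|^2=\lambda^2+(m-1)\mu^2=\frac{m^2(m+8)}{4(m-1)}\,\mcf^2.
$$

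Substituting $|A|^2$ into \eqref{eq: caract_bih_PNMC_spheres_2}(ii) converts it into the closed identity $\Delta\mcf=m\mcf-\tfrac{m^2(m+8)}{4(m-1)}\mcf^3$. On the other hand, with $X=\grad\mcf/|\grad\mcf|$ spanning $V_\lambda$, Codazzi for $Y\in V_\mu$ gives
$$
\nabla_Y X=\frac{X(\mu)}{\lambda-\mu}\,Y=-\frac{3|\grad\mcf|}{(m+2)\mcf}\,Y,
$$
which encodes $\Div X$ transverse to the gradient direction. Combining this with the gradient-direction Codazzi relation and, in codimension at least two, with \eqref{eq: caract_bih_PNMC_spheres_2}(i), which pins down the second fundamental form orthogonal to $\xi$, over-determines the system: computing $\Delta\mcf=-\Div(|\grad\mcf|\,X)$ in both directions and comparing with the closed PDE above produces an identity which cannot hold if $|\grad\mcf|>0$, contradicting the non-emptiness of $U$.

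The main obstacle I anticipate is precisely this final step: once the clean structural data ($\dim V_\lambda=1$, explicit $\lambda,\mu,|A|^2$ in terms of $\mcf$) is in hand, the real work is extracting an honest contradiction from the combined Codazzi relations, \eqref{eq: caract_bih_PNMC_spheres_2}(ii), and, in codimension $\geq 2$, \eqref{eq: caract_bih_PNMC_spheres_2}(i). The hypersurface case of \cite{BMO08} suggests the computation closes, but one has to manage the $V_\mu$-block of the second fundamental form (which is non-trivial in codimension $\geq 2$) and keep a careful eye on degenerate dimensions (such as $m=4$, which already caused the exclusion in Proposition~\ref{th: bih_pseudo->CMC}).
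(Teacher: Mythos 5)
Your structural reductions are correct and coincide with the paper's: on the set where $\grad\mcf\neq 0$ you rightly get, from \eqref{eq: caract_bih_PNMC_spheres_2}(iii), that $\grad\mcf$ is a principal direction with eigenvalue $-\tfrac{m}{2}\mcf$; your Codazzi argument for $\dim V_\lambda=1$ is sound (and essentially the frame computation the paper performs); and the resulting formulas for $\mu$, $|A|^2$ and the identity $\Delta\mcf=m\mcf-\tfrac{m^2(m+8)}{4(m-1)}\mcf^3$ all match. But the proof is not complete: the final contradiction, which you yourself flag as ``the main obstacle,'' is exactly the part that is missing, and the mechanism you sketch for it would not suffice. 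Computing $\Delta\mcf=-\Div(|\grad\mcf|\,E_1)$ via the connection coefficient $\omega_1^\alpha(E_\alpha)=-\tfrac{3}{m+2}\tfrac{E_1(\mcf)}{\mcf}$ and equating with \eqref{eq: caract_bih_PNMC_spheres_2}(ii) yields only \emph{one} second-order ODE for $\mcf$ along integral curves of $E_1$, namely a single relation among $E_1(E_1(\mcf))$, $(E_1(\mcf))^2$ and $\mcf$. A single such ODE has non-constant solutions, so no contradiction follows from it alone.

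The missing idea is a \emph{second, independent} ODE, which the paper extracts from the Gauss equation. Concretely: since $\nabla^\perp\xi=0$, the Ricci equation forces $A$ to commute with $A_\eta$ for every normal $\eta\perp\xi$; multiplicity one of $k_1$ then makes $E_1$ an eigenvector of each $A_\eta$, and condition \eqref{eq: caract_bih_PNMC_spheres_2}(i) together with $\trace A_\eta=0$ forces $A_\eta(E_1)=0$. (Your phrase that (i) ``pins down the second fundamental form orthogonal to $\xi$'' is too optimistic in general --- it only controls $B$ on the $E_1$-directions, but that is all that is needed.) With this, the sectional curvature $\langle R(E_1,E_\alpha)E_\alpha,E_1\rangle$ can be computed extrinsically via Gauss (giving $1+$ terms in $k_1k_2$ only) and intrinsically via the connection forms (giving $-E_1(\omega_1^\alpha(E_\alpha))-(\omega_1^\alpha(E_\alpha))^2$, hence involving $E_1(E_1(\mcf))$ again). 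This produces equation \eqref{eq: cons_Gauss}, a second relation of the same shape but with different coefficients. Writing $w=(E_1(\mcf))^2$ as a function of $\mcf$ along an integral curve of $E_1$, the two ODEs become the first-order system \eqref{eq: sist_mcf}; subtracting them eliminates $dw/d\mcf$, and one checks the resulting algebraic system in $(w,\mcf)$ has no solution with $w>0$ --- with the case $m=4$ treated separately, since there the $w$-coefficients $\tfrac{m+5}{m+2}$ and $\tfrac{3(m-1)}{m+2}$ coincide and the subtraction immediately forces $\mcf$ constant. Without this second equation from Gauss, your argument stops one step short of the theorem.
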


\begin{proof}
It is sufficient to prove that $\mcf$, which is a positive function on $M$, is constant. Suppose that $\mcf\neq{\rm constant}$.
Then, there exists $p\in M$ such that $\grad_p \mcf\neq 0$, thus there exists $U$ a neighborhood of $p$ in $M$ such that $\grad \mcf\neq 0$ on $U$.
Taking into account Proposition~\ref{re: pnmc-pseudo},  $U$ can not be made out only of pseudo-umbilical points.
We can then assume that there exists a point $q\in U$ which is not pseudo-umbilical.  Then, eventually by restricting $U$, we can assume that $A\neq\mcf\Id$ at every point of $U$, thus $A$ has exactly two distinct principal curvatures on $U$.

From \eqref{eq: caract_bih_PNMC_spheres_2} (iii) we have
$$
A(\grad\mcf)=-\frac{m}{2}\mcf\grad\mcf,
$$
i.e. $E_1=\frac{\grad \mcf}{|\grad\mcf|}$ is a principal direction for $A$, with principal curvature
\begin{equation}\label{eq: k1}
k_1=-\frac{m}{2}\mcf.
\end{equation}

Recall that, as $A$ has exactly two distinct principal curvatures, the multiplicities of its principal curvatures are constant and the principal curvatures are smooth (see \cite{R69}). Thus $A$ is diagonalizable with respect to a local orthonormal frame field, and we can chose it to have the first field equal to $E_1$, i.e.  the frame field  is $\{E_1,\ldots, E_m\}$. We then have $A(E_i)=\bar{k}_i E_i$, $i=1,\ldots,m$, where not all the
$\bar{k}_i$'s are different and, by construction, $\bar{k}_1=k_1$. Since $\langle E_\alpha,E_1\rangle=0$, we have on $U$
\begin{equation}\label{eq: E_alpha_mcf}
E_\alpha(\mcf)=0\,, \quad \forall \, \alpha=2,\dots, m.
\end{equation}

We shall use the connection equations with respect to the frame field $\{E_1,\ldots, E_m\}$,
\begin{equation}\label{eq: conn_eq}
\nabla_{E_i}E_j=\omega_j^k(E_i)E_k.
\end{equation}

Let us first prove that the multiplicity of $k_1$ is $m_1=1$. Suppose that there exists $\alpha\in\{2,\ldots, m\}$, such that $\bar{k}_{\alpha}=k_1$ on $U$. Since $\nabla^\perp\xi=0$, the Codazzi equation for $A$ is
\begin{equation}\label{eq: codazzi_Am1}
(\nabla_{E_i} A)(E_j)=(\nabla_{E_j}A)(E_i),\quad \forall\, i,j=1,\ldots,m.
\end{equation}
By using \eqref{eq: conn_eq}, the Codazzi equation becomes
\begin{equation}\label{eq: codazzi_Am1_2}
E_i(\bar{k}_j)E_j+\sum_{\ell=1}^m(\bar{k}_j-\bar{k}_\ell)\omega_j^\ell(E_i)E_\ell= E_j(\bar{k}_i)E_i+\sum_{\ell=1}^m(\bar{k}_i-\bar{k}_\ell)\omega_i^\ell(E_j)E_\ell.
\end{equation}

Putting $i=1$ and $j=\alpha$ in \eqref{eq: codazzi_Am1_2} and taking the scalar product with $E_\alpha$ we obtain $E_1(k_1)=0$, which, together with \eqref{eq: k1} and \eqref{eq: E_alpha_mcf}, gives $\mcf={\rm constant}$, thus we have a contradiction.

Thus $\bar{k}_1=k_1$ and $\bar{k}_\alpha=k_2$, for all $\alpha=2,\ldots,m$,
and since $\trace A=m\mcf$, we get
\begin{equation}\label{eq: k2}
k_2=\frac{3}{2}\frac{m}{m-1}\mcf.
\end{equation}

Putting  $i=1$ and $j=\alpha$ in \eqref{eq: codazzi_Am1_2} and taking the scalar product with $E_\alpha$, $E_\beta$, $\beta\neq\alpha$, and $E_1$, respectively, one gets
\begin{subequations}
\begin{equation}\label{eq: cons_c_1}
\omega_1^\alpha(E_\alpha)=-\frac{3}{m+2}\frac{E_1(\mcf)}{\mcf},
\end{equation}
\begin{equation}\label{eq: cons_c_2}
\omega_1^\alpha(E_\beta)=0,
\end{equation}
\begin{equation}\label{eq: cons_c_3}
\omega_1^\alpha(E_1)=0,
\end{equation}
for all $\alpha, \beta=2,\ldots,m$, $\alpha\neq\beta$.
\end{subequations}


Consider $\{\eta_{m+1}=\xi,\eta_{m+2}\ldots,\eta_n\}$ to be an orthonormal normal frame field on $U$ in $\mathbb{S}^n$ and denote by $A_a=A_{\eta_a}$, $a=m+2,\ldots, n$. Since $\nabla^\perp \xi=0$, from the Ricci equation of $U$ in $\mathbb{S}^n$, we have
$$
A\circ A_a=A_a\circ A,\quad \forall\, a=m+2,\ldots, n.
$$
Since  $k_1$ has multiplicity $1$, if follows directly that  $E_1$ is a principal direction for $A_a$, for all $a=m+2,\ldots,n$. Fix $a\in\{m+2,\ldots,n\}$ and denote $A_a(E_1)=\lambda_a E_1$ on $U$. From \eqref{eq: caract_bih_PNMC_spheres_2} (i), we have that $\sum_{i=1}^m \langle A(E_i),A_a(E_i)\rangle=0$ and this leads to
$$
(k_1-k_2)\lambda_a+k_2\trace A_a=0.
$$
Since $\trace A_a=m\langle \mcv, \eta_a \rangle=0$, we conclude that $\lambda_a=0$, i.e.
\begin{equation}\label{eq: AaE1}
A_a(E_1)=0,\quad \forall \,a=m+2,\ldots,n.
\end{equation}

We now express the Gauss equation for $U$ in $\mathbb{S}^n$,
\begin{eqnarray}\label{eq: Gauss_EQ}
\langle R^{\mathbb{S}^n}(X,Y)Z,W\rangle&=&\langle
R(X,Y)Z,W\rangle\nonumber\\&&+\langle B(X,Z),B(Y,W)\rangle-\langle
B(X,W),B(Y,Z)\rangle,
\end{eqnarray}
with  $X=W=E_1$ and $Y=Z=E_\alpha$. Using \eqref{eq: AaE1} one obtains
$$
B(E_1,E_\alpha)=0,\quad
B(E_1,E_1)=k_1\xi,\quad
\langle B(E_\alpha,E_\alpha), B(E_1,E_1)\rangle=k_1 k_2.
$$
From \eqref{eq: conn_eq}, \eqref{eq: cons_c_2}, \eqref{eq: cons_c_3}, and using $\omega^k_j=-\omega^j_k$, the curvature term is
$$
\langle R(E_1,E_\alpha)E_\alpha,E_1\rangle=-E_1(\omega_1^\alpha(E_\alpha)) -(\omega^\alpha_1(E_\alpha))^2.
$$
Finally, \eqref{eq: Gauss_EQ} and \eqref{eq: cons_c_1} imply
\begin{equation}\label{eq: cons_Gauss}
\mcf E_1(E_1(\mcf))=\frac{m+2}{3}\mcf^2 -\frac{m^2(m+2)}{4(m-1)}\mcf^4+\frac{m+5}{m+2}(E_1(\mcf))^2.
\end{equation}
From \eqref{eq: k1} and \eqref{eq: k2}, we have
\begin{equation}\label{eq: normA}
|A|^2=k_1^2+(m-1)k_2^2=\frac{m^2(m+8)}{4(m-1)}\mcf^2.
\end{equation}
Moreover, using \eqref{eq: E_alpha_mcf}, \eqref{eq: conn_eq} and \eqref{eq: cons_c_1} the Laplacian of $\mcf$ becomes
\begin{eqnarray}\label{eq: Delta}
\Delta\mcf&=& -E_1(E_1(\mcf))-\sum_{\alpha=2}^m E_\alpha(E_\alpha(\mcf))+(\nabla_{E_1}E_1)\mcf+\sum_{\alpha=2}^m (\nabla_{E_\alpha} E_\alpha)\mcf\nonumber\\
&=&-E_1(E_1(\mcf))+\sum_{\alpha=2}^m \omega_\alpha^1(E_\alpha)E_1(\mcf)\nonumber\\
&=&-E_1(E_1(\mcf))+\frac{3(m-1)}{m+2}\frac{(E_1(\mcf))^2}{f}.
\end{eqnarray}

From \eqref{eq: caract_bih_PNMC_spheres_2} (ii), by substituting \eqref{eq: normA} and \eqref{eq: Delta}, we get
\begin{equation}\label{eq: cons_bih}
\mcf E_1(E_1(\mcf))=-m\mcf^2 +\frac{m^2(m+8)}{4(m-1)}\mcf^4+\frac{3(m-1)}{m+2}(E_1(\mcf))^2.
\end{equation}

Consider now $\gamma=\gamma(u)$ to be an arbitrary integral curve of $E_1$ in $U$. Along $\gamma$ we have $f=f(u)$ and we set $w=(E_1(f))^2=(\mcf')^2$. Then ${dw}/{d\mcf}=2\mcf''$, and \eqref{eq: cons_Gauss} and \eqref{eq: cons_bih} become
\begin{equation}\label{eq: sist_mcf}
\left\{
  \begin{array}{ll}
    \dfrac{1}{2}\mcf \dfrac{dw}{d\mcf}=\dfrac{m+2}{3}\mcf^2-\dfrac{m^2(m+2)}{4(m-1)}\mcf^4+\dfrac{m+5}{m+2}w, \vspace{2mm}\\
\dfrac{1}{2}\mcf \dfrac{dw}{d\mcf}=-m\mcf^2+\dfrac{m^2(m+8)}{4(m-1)}\mcf^4+\dfrac{3(m-1)}{m+2}w.
  \end{array}
\right.
\end{equation}
By subtracting the two equations we find two cases.

If $m=4$, then
$$
-\frac{2(2m+1)}{3\mcf^2}f^2+\frac{m^2(m+5)}{2(m-1)}\mcf^4=0,
$$
thus $\mcf$ is constant.

If $m\neq 4$, then
$$
w=\frac{(m+2)(2m+1)}{3(m-4)}\mcf^2-\frac{m^2(m+2)(m+5)}{4(m-4)(m-1)}\mcf^4.
$$
Differentiating with respect to $f$ and replacing this in the second equation of \eqref{eq: sist_mcf}, we get
$$
\frac{(m-1)(m+5)}{3}\mcf^2+\frac{3m^2(2m+1)}{4(m-1)}\mcf^4=0.
$$
Therefore $\mcf$ is constant along $\gamma$, thus $\grad \mcf=0$ along $\gamma$ and we have a contradiction.
\end{proof}

As a consequence of Theorem~\ref{th: class_PNMC_AH_2->PMC} we have the following rigidity result.

\begin{theorem}\label{th: class_PNMC_AH_2}
Let $M^m$ be a PNMC biharmonic submanifold in $\mathbb{S}^n$ with at
most two distinct principal curvatures in the direction of ${\mcv}$.
Then either $M$ is minimal in $\mathbb{S}^{n-1}(1/\sqrt 2)$, or
locally,
$$
M=M^{m_1}_1\times M^{m_2}_2\subset \mathbb{S}^{n_1}
(1/\sqrt 2)\times \mathbb{S}^{n_2}(1/\sqrt 2)\subset\mathbb{S}^n,
$$
where $M_i$ is a minimal submanifold of $\mathbb{S}^{n_i}(1/\sqrt
2)$, $i=1,2$, $m_1+m_2=m$, $m_1\neq m_2$, $n_1+n_2=n-1$.
\end{theorem}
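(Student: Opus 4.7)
My plan is to reduce the theorem to the corresponding classification for PMC biharmonic submanifolds and then pin down the geometry from the biharmonic equations. First, Theorem~\ref{th: class_PNMC_AH_2->PMC} gives that $M$ is PMC and hence CMC, and Theorem~\ref{th: classif_bih const mean} forces $\mcf\in(0,1]$. If $\mcf=1$, Theorem~\ref{th: classif_bih const mean} directly yields that $M$ is minimal in $\mathbb{S}^{n-1}(1/\sqrt 2)$, which is the first alternative. So from now on I assume $\mcf\in(0,1)$.

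In this regime, Proposition~\ref{re: pnmc-pseudo} rules out the pseudo-umbilical case (which would force $\mcf=1$), so $A=A_\xi$ has exactly two distinct principal curvatures $k_1,k_2$ with constant multiplicities $m_1,m_2$ summing to $m$. Equation \eqref{eq: caract_bih_PNMC_spheres_2}(ii) yields $|A|^2=m$, and combined with $\trace A=m\mcf$ this determines $k_1$ and $k_2$ as explicit constants depending only on $m_1,m_2,\mcf$.

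Next I would exploit the Ricci equation: since $\nabla^\perp\xi=0$, it gives $[A,A_\eta]=0$ for every $\eta\perp\xi$, so each $A_\eta$ preserves the two eigendistributions of $A$. Coupled with condition (i) of \eqref{eq: caract_bih_PNMC_spheres_2} and the Codazzi equation, a standard analysis (analogous to the one carried out for PMC biharmonic submanifolds in \cite{BO11}) shows that the two eigendistributions of $A$ are integrable, their leaves are totally umbilical in $\mathbb{S}^n$ with constant mean curvature, and $M$ locally splits as a Riemannian product $M_1^{m_1}\times M_2^{m_2}$ sitting inside a Clifford torus $\mathbb{S}^{n_1}(r_1)\times\mathbb{S}^{n_2}(r_2)\subset\mathbb{S}^n$, with $n_1+n_2=n-1$ and $M_i$ minimal in $\mathbb{S}^{n_i}(r_i)$.

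Finally, a direct computation shows that the principal curvatures of such a product in the direction of $\mcv$ are $r_2/r_1$ and $-r_1/r_2$. The constraint $|A|^2=m$ then reduces to $m_1 t^2-(m_1+m_2)t+m_2=0$ with $t=r_2^2/r_1^2$, whose roots are $t=1$ and $t=m_2/m_1$; the latter root corresponds to the minimal case $\mcf=0$ and is therefore excluded by the assumption $\mcf>0$. Thus $r_1=r_2=1/\sqrt 2$, and $\mcf=|m_1-m_2|/m$ forces $m_1\neq m_2$. The main obstacle I expect is establishing the local product decomposition in the third step from the algebraic constraints $[A,A_\eta]=0$ and $\langle A,A_\eta\rangle=0$; this is where the bulk of the technical work lies and is most efficiently handled by invoking the corresponding PMC classification already developed in \cite{BO11}.
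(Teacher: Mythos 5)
Your proposal is correct and takes essentially the same route as the paper: first reduce to the PMC case via Theorem~\ref{th: class_PNMC_AH_2->PMC}, then hand the remaining work to the PMC classification in \cite{BO11} (the paper simply cites Proposition~3.19 there to get $\nabla A_{\mcv}=0$ and Theorem~3.16 for the B3/B4 dichotomy, which subsumes your explicit analysis of the radii, multiplicities and the case $\mcf=1$).
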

\begin{proof}
From Theorem~\ref{th: class_PNMC_AH_2->PMC} we conclude that $M$ is PMC. Moreover, since $A_H$ has at most two distinct principal curvatures in the direction of ${\mcv}$, from Proposition~3.19 in \cite{BO11}, we get that $\nabla A_H=0$ and the conclusion follows by applying Theorem~3.16 in \cite{BO11}.
\end{proof}

Moreover, as a corollary of Theorem \ref{th: class_PNMC_AH_2}, the following rigidity result, which generalizes Theorem~5.6 in \cite{BMO08}, is valid.
\begin{corollary}\label{th: class_surf_bih_PNMC}
Let $M^2$ be a PNMC biharmonic surface in $\mathbb{S}^n$. Then $M$
is minimal in $\mathbb{S}^{n-1}(1/\sqrt 2)$.
\end{corollary}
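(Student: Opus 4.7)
The plan is to derive this corollary as an immediate specialization of Theorem~\ref{th: class_PNMC_AH_2} to the two-dimensional case. The main observation is that for a surface $M^2$, the Weingarten operator $A$ associated to the normalized mean curvature vector field $\xi$ is a symmetric endomorphism of a $2$-dimensional tangent space, hence it has at most two distinct eigenvalues at every point. In particular, the hypothesis of Theorem~\ref{th: class_PNMC_AH_2} on the number of distinct principal curvatures in the direction of $\mcv$ is automatically satisfied.

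First, I would invoke Theorem~\ref{th: class_PNMC_AH_2} to conclude that either $M^2$ is minimal in a small hypersphere $\mathbb{S}^{n-1}(1/\sqrt 2)$, or locally $M^2 = M_1^{m_1}\times M_2^{m_2}\subset \mathbb{S}^{n_1}(1/\sqrt 2)\times \mathbb{S}^{n_2}(1/\sqrt 2)\subset\mathbb{S}^n$, with $M_i$ minimal in $\mathbb{S}^{n_i}(1/\sqrt 2)$, $m_1+m_2=2$ and $m_1\neq m_2$.

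Next, I would rule out the product case. Since $M_1$ and $M_2$ are factors of a smooth product surface, we need $m_1\geq 1$ and $m_2\geq 1$. Combined with $m_1+m_2=2$ this forces $m_1=m_2=1$, contradicting the constraint $m_1\neq m_2$ built into the B4 family. Therefore the second alternative of Theorem~\ref{th: class_PNMC_AH_2} cannot occur, and we are left with $M$ being minimal in $\mathbb{S}^{n-1}(1/\sqrt 2)$, which is exactly the statement of the corollary.

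There is essentially no obstacle here: all the analytic work (proving PMC via Theorem~\ref{th: class_PNMC_AH_2->PMC}, then invoking Proposition~3.19 and Theorem~3.16 of \cite{BO11} to arrive at the B3/B4 dichotomy) has already been done in Theorem~\ref{th: class_PNMC_AH_2}. The only content of the corollary is the combinatorial observation that a two-dimensional product with distinct factor dimensions is impossible, so the proof reduces to a short remark.
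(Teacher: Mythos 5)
Your proposal is correct and is exactly the argument the paper intends (the paper states the corollary without writing out a proof): for a surface the operator $A_{\mcv}$ acts on a $2$-dimensional space and so automatically has at most two distinct principal curvatures, and the product alternative of Theorem~\ref{th: class_PNMC_AH_2} is ruled out since $m_1+m_2=2$ with $m_1\neq m_2$ and $m_i\geq 1$ is impossible.
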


\begin{remark}

(i) In \cite{C80} it was proved that, in general, a PNMC analytic surface in $\mathbb{S}^n$ is either minimal in a small hypersphere of $\mathbb{S}^n$, and therefore it is PMC, or it lies in a $4$-dimensional great sphere $\mathbb{S}^4\subset\mathbb{S}^n$. Notice that with no analyticity condition, by Corollary \ref{th: class_surf_bih_PNMC}, the supplementary hypothesis that the surface is biharmonic leads only to the first case.

(ii) For the particular case of PNMC biharmonic surfaces in
$\mathbb{S}^4$ we can give a different proof for Theorem  \ref{th:
class_PNMC_AH_2}. Indeed, using the codimension reduction result of
Erbacher (\cite{E71}), one can prove that the surface lies in a
great hypersphere $\mathbb{S}^3$ of $\mathbb{S}^4$ and, therefore it
must have constant mean curvature.

(iii) We can slightly relax the hypotheses of Theorem \ref{th: class_PNMC_AH_2}, obtaining the same result, in the following way. By the unique continuation property for biharmonic maps (see \cite{O03}), if $M$ is a proper biharmonic submanifold in $\mathbb{S}^n$, then $\mcv$ is nowhere zero on an open dense subset $W\subset M$. If we assume that $\nabla^{\perp}(\mcv/\mcf)=0$ on $W$ and $A_\mcv$ has at most two distinct principal curvatures everywhere on $W$, then by Theorem \ref{th: class_PNMC_AH_2} we get $\nabla^{\perp}\mcv=0$ on $W$. By continuity we obtain $\nabla^{\perp}\mcv=0$ on $M$.

\end{remark}

\section{On the type of biharmonic submanifolds in $\mathbb{S}^n$}\label{sec: fine-type}

\begin{defi}[\cite{C96, C84}]\label{def: finite type}
A submanifold $M\subset\mathbb{R}^{n+1}$ is called of \textit{finite type} if its inclusion $\phi:M\to \mathbb{R}^{n+1}$  can be expressed as a finite sum of $\mathbb{R}^{n+1}$-valued eigenmaps of the Laplacian $\Delta$ of $M$, i.e.
\begin{equation}\label{eq: spec_decomp}
\phi=\phi_0+\phi_{t_1}+\ldots+\phi_{t_k},
\end{equation}
where $\phi_0\in \mathbb{R}^{n+1}$ is a constant vector, $\phi_{t_i}:M\to \mathbb{R}^{n+1}$ are non-constant maps satisfying $\Delta\phi_{t_i}=\lambda_{t_i}\phi_{t_i}$, $i=1,\ldots, k$. If, in particular, all eigenvalues $\lambda_{t_i}$ are assumed to be mutually distinct the submanifold is said to be of \textit{k-type} and \eqref{eq: spec_decomp} is called the \textit{spectral decomposition} of $\phi$.
\end{defi}
\begin{remark}
If $M$ is compact the inclusion $\phi:M\to \mathbb{R}^{n+1}$ admits a unique spectral decomposition
$\phi=\phi_0+\sum_{i=1}^{\infty}\phi_{i}$, where $\phi_0$ is the {\it center of mass}. Then, it is of $k$-type if only  $k$ terms of  $\{\phi_i\}_{i=1}^{\infty}$ are not vanishing.   In the non compact case the spectral decomposition  $\phi=\phi_0+\sum_{i=1}^{\infty}\phi_{i}$ is not guaranteed. Nonetheless, if Definition~\ref{def: finite type} is satisfied the spectral decomposition is unique.  Notice also that, in the non-compact case, the harmonic component of the spectral decomposition is not necessarily constant. Finite type submanifolds with non-constant harmonic component are called {\em null finite type} submanifolds.
\end{remark}

The inclusion of a \textit{k-type} submanifold is said to be \textit{linearly independent} if the linear subspaces
$$
E_{t_i}=\spn \{\phi_{t_i}(u): u\in M\},\quad i=1,\ldots, k,
$$
are linearly independent, i.e. the dimension of the subspace spanned by vectors in $\bigcup_{i=1}^k E_{t_i}$ is equal to $\sum_{i=1}^k\dim E_{t_i}$.

The following result provides us a necessary and a sufficient condition for a submanifold to be of finite type.

\begin{theorem}\cite{C84, CP91}\label{th: crit_fin_type}
Let $\phi:M\to \mathbb{R}^{n+1}$ be an isometric immersion.
\begin{itemize}
\item[(i)] If $M$ is of finite $k$-type, there exist a constant
vector $\phi_0\in \mathbb{R}^{n+1}$ and a monic polynomial with
simple roots $P$ of degree $k$ with $P(\Delta)(\phi-\phi_0)=0$ .
\item[(ii)] If there exist a constant vector $\phi_0\in \mathbb{R}^{n+1}$ and a polynomial $P$ with simple roots such that $P(\Delta)(\phi-\phi_0)=0$, then $M$ is of finite $k$-type with $k\leq \degree(P)$.
\end{itemize}
\end{theorem}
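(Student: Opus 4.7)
The plan is to treat the two implications separately, with (i) being a direct algebraic construction from the spectral decomposition and (ii) inverting that construction via a Lagrange-interpolation identity. Both halves ultimately reduce to polynomial identities in the commuting operator $\Delta$ applied to $\phi-\phi_0$.

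For (i), I would start from the given spectral decomposition $\phi=\phi_0+\phi_{t_1}+\cdots+\phi_{t_k}$ with the $\lambda_{t_i}$ pairwise distinct, and simply define
$$P(x)=\prod_{i=1}^{k}(x-\lambda_{t_i}).$$
This is monic, of degree exactly $k$, and has simple roots. Applying $P(\Delta)$ termwise to $\phi-\phi_0=\sum_i\phi_{t_i}$ and using $\Delta\phi_{t_i}=\lambda_{t_i}\phi_{t_i}$ yields $P(\Delta)\phi_{t_i}=P(\lambda_{t_i})\phi_{t_i}=0$ for each $i$, so $P(\Delta)(\phi-\phi_0)=0$. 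No compactness is needed here; the manipulation is purely formal.

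For (ii), write $P(x)=\prod_{i=1}^{k}(x-\mu_i)$ and set $Q_i(x)=P(x)/(x-\mu_i)$, so that $Q_i(\mu_i)=\prod_{j\neq i}(\mu_i-\mu_j)\neq 0$. The Lagrange interpolation identity
$$1=\sum_{i=1}^{k}\frac{Q_i(x)}{Q_i(\mu_i)}$$
holds as a polynomial identity, because both sides are polynomials of degree $\leq k-1$ agreeing at the $k$ distinct points $\mu_1,\dots,\mu_k$. Substituting $\Delta$ for $x$ and applying to $\phi-\phi_0$ gives the candidate decomposition
$$\phi-\phi_0=\sum_{i=1}^{k}\phi_i,\qquad \phi_i:=\frac{1}{Q_i(\mu_i)}\,Q_i(\Delta)(\phi-\phi_0).$$
To check that $\Delta\phi_i=\mu_i\phi_i$, I would use the polynomial identity $xQ_i(x)=\mu_i Q_i(x)+P(x)$, which yields $\Delta\, Q_i(\Delta)=\mu_i Q_i(\Delta)+P(\Delta)$ as operators; applying this to $\phi-\phi_0$ and invoking the hypothesis $P(\Delta)(\phi-\phi_0)=0$ delivers $\Delta\phi_i=\mu_i\phi_i$. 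After discarding any $\phi_i$ that happens to vanish, what remains is a decomposition into finitely many eigenmaps of $\Delta$ with pairwise distinct eigenvalues, so $M$ is of finite type with type $\leq k=\deg P$.

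The main obstacle I anticipate is not the algebra but a bookkeeping issue: matching the resulting decomposition to Definition~\ref{def: finite type}. Specifically, one has to verify that if some $\mu_i=0$ the corresponding $\phi_i$ is allowed to be non-constant (this is the null finite type situation already noted in the remark after the definition), and that the $\phi_i$ with $\phi_i\neq 0$ are genuinely non-constant eigenmaps, which follows from the eigenvalue equation together with $\phi_i\neq 0$. Uniqueness of the decomposition in (ii) is then automatic because the $\mu_i$ are distinct and eigenspaces of $\Delta$ for distinct eigenvalues are independent; this is the point where one uses that the roots of $P$ are \emph{simple}.
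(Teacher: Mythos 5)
Your argument is correct and is essentially the classical proof from the cited sources: the paper itself gives no proof of this theorem (it is quoted from \cite{C84, CP91}), so there is nothing internal to compare against, but your construction of $P(x)=\prod_i(x-\lambda_{t_i})$ for (i) and the Lagrange/partial-fraction identity $1=\sum_i Q_i(x)/Q_i(\mu_i)$ together with $xQ_i(x)=\mu_iQ_i(x)+P(x)$ for (ii) is exactly the standard route, and your bookkeeping about vanishing components, the harmonic ($\mu_i=0$) component in the null finite type case, and non-constancy of the remaining eigenmaps is handled correctly. The one point you should make explicit is that ``simple roots'' must be read as \emph{distinct real} roots: if $P$ had a non-real conjugate pair of roots with non-vanishing associated components, your $\phi_i$ would be complex-valued eigenmaps and the conclusion of (ii), as formulated via Definition~\ref{def: finite type}, would fail; this is harmless in the paper's applications (e.g.\ in Theorem~\ref{th: type12bih} the polynomial $x^2-2mx-m^2({\mcf}^2-1)$ has discriminant $4m^2{\mcf}^2>0$), but it is a hypothesis your proof silently uses.
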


We shall also use the following version.

\begin{theorem}\cite{C84, CP91}\label{th: crit_fin_type_H}
Let $\phi:M\to \mathbb{R}^{n+1}$ be an isometric immersion.
\begin{itemize}
\item[(i)] If $M$ is of finite $k$-type, there exists a monic polynomial $P$ of degree $k-1$ or $k$ with $P(\Delta)\mcv^0=0$ .
\item[(ii)] If there exists a polynomial $P$ with simple roots such that $P(\Delta)\mcv^0=0$, then $M$ is of infinite type or of finite $k$-type with $k-1\leq \degree(P)$.
\end{itemize}
Here ${\mcv}^0$ denotes the mean curvature vector field of $M$ in $\mathbb{R}^{n+1}$.
\end{theorem}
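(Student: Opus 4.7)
The plan is to use Beltrami's formula $\Delta\phi=-m\,\mcv^0$ as the bridge between the spectral decomposition of $\phi$ and the spectral information on $\mcv^0$, transporting the statement of Theorem~\ref{th: crit_fin_type} back and forth between the immersion $\phi$ and its mean curvature vector field.

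For (i), I would start from the spectral decomposition $\phi=\phi_0+\phi_{t_1}+\dots+\phi_{t_k}$ furnished by Definition~\ref{def: finite type}, apply $\Delta$ termwise, and use Beltrami to write
\[
-m\,\mcv^0=\Delta\phi=\sum_{i=1}^{k}\lambda_{t_i}\,\phi_{t_i}.
\]
Since the eigenvalues $\lambda_{t_i}$ are pairwise distinct, at most one of them can vanish (the null finite type case, where a non-constant harmonic summand is admissible in the non-compact setting). Discarding such vanishing summands, $\mcv^0$ becomes a linear combination of eigenvectors of $\Delta$ whose eigenvalues are precisely the non-zero $\lambda_{t_i}$'s, and the monic polynomial having these as its simple roots annihilates $\mcv^0$. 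Its degree is $k$ or $k-1$, depending on whether none or exactly one of the $\lambda_{t_i}$'s is zero, which gives the claim.

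For (ii), I would reverse the direction of transport. Beltrami's formula turns the hypothesis $P(\Delta)\mcv^0=0$ into
\[
Q(\Delta)\phi=\Delta\bigl(P(\Delta)\phi\bigr)=P(\Delta)(\Delta\phi)=-m\,P(\Delta)\mcv^0=0,\qquad Q(x):=xP(x).
\]
If $0$ is not a root of $P$, then $Q$ has simple roots, and I would invoke Theorem~\ref{th: crit_fin_type}(ii) with constant vector $\phi_0=0$ to conclude that $M$ is of finite $k$-type with $k\leq\deg Q=\deg P+1$, that is, $k-1\leq\deg P$. If $0$ is a root of $P$, I would factor $P(x)=x\,\widetilde P(x)$ with $\widetilde P(0)\neq 0$ and simple roots; the condition $P(\Delta)\mcv^0=0$ then only says that $\widetilde P(\Delta)\mcv^0$ is harmonic. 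When $M$ is of finite type, the uniqueness of its spectral decomposition confines the non-constant eigenvector terms of $\phi$ to the roots of $Q$, and the bound $k-1\leq\deg P$ follows as before; when $M$ is not, this is exactly the ``infinite type'' alternative permitted by the statement.

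The hard part will be the case $P(0)=0$ in (ii) on a non-compact $M$: there, the harmonic $\mathbb{R}^{n+1}$-valued functions form an infinite-dimensional space, so $Q(\Delta)\phi=0$ can no longer be turned into a finite-type statement through Theorem~\ref{th: crit_fin_type}(ii), whose hypothesis requires simple roots of the annihilating polynomial. The ``infinite type'' clause in the statement is designed to absorb precisely this pathology, whereas in the compact case harmonic collapses to constant and both branches return the same polynomial bound $k-1\leq\deg P$.
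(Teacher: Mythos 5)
The paper offers no proof of this statement: it is quoted verbatim from \cite{C84, CP91} and used as a black box, so there is no argument of the authors' to compare yours against. Judged on its own, your proof is the standard one and is sound. For (i), applying Beltrami's formula $\Delta\phi=-m\mcv^0$ to the spectral decomposition exhibits $-m\mcv^0=\sum_i\lambda_{t_i}\phi_{t_i}$, and the monic polynomial whose simple roots are the nonzero $\lambda_{t_i}$'s annihilates $\mcv^0$, with degree $k$ or $k-1$ according as no or one eigenvalue vanishes (the null finite type case). For (ii), note that your two-branch discussion can be compressed: since ``infinite type'' is an admitted alternative, you only need to show that \emph{if} $M$ is of finite $k$-type then $k-1\le\degree(P)$, and this follows in all cases from $0=-mP(\Delta)\mcv^0=\sum_i\lambda_{t_i}P(\lambda_{t_i})\phi_{t_i}$ together with the linear independence of nonzero eigenmaps with distinct eigenvalues (each nonzero $\lambda_{t_i}$ must be a root of $P$, and at most one $\lambda_{t_i}$ is zero, so $k\le\degree(P)+1$); the detour through $Q(x)=xP(x)$ and Theorem~\ref{th: crit_fin_type}(ii) is only needed if you want the sharper conclusion that $M$ is actually of finite type when $P(0)\neq0$. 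The one point you lean on implicitly is that ``uniqueness of the spectral decomposition'' really means this linear independence; it is justified by applying $\prod_{j\neq i}(\Delta-\lambda_{t_j})$ to the vanishing sum, and is worth stating since the non-compact case is the whole point of the generalization.
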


A well known result of T.~Takahashi can be rewritten as the classification of $1$-type submanifolds in $\mathbb{R}^{n+1}$.
\begin{theorem}[\cite{T66}]\label{th: Takahashi}
A submanifold $M\subset\mathbb{R}^{n+1}$ is of $1$-type if and only if either $M$ is a minimal submanifold of $\mathbb{R}^{n+1}$, or $M$ is a minimal submanifold of a hypersphere of $\mathbb{R}^{n+1}$.
\end{theorem}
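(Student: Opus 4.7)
The proof will rest on Beltrami's formula, which for any isometric immersion $\phi:M^m\to\mathbb{R}^{n+1}$ reads
$$
\Delta \phi = -m\,\mcv^0,
$$
where $\mcv^0$ is the mean curvature vector field of $M$ in $\mathbb{R}^{n+1}$. (With the sign convention $\Delta=-\trace\nabla^2$ adopted in the paper, this follows by applying $\Delta$ componentwise to the position map and using $\tau(\phi)=m\mcv^0$.) With this identity in hand, both implications reduce to algebraic manipulations, so the plan is to set up the two directions separately.

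\textbf{Sufficiency.} Suppose first that $M$ is minimal in $\mathbb{R}^{n+1}$. Then $\Delta\phi=0$, and the trivial decomposition $\phi=0+\phi$ is a spectral decomposition with a single eigenvalue $\lambda=0$, so $M$ is of $1$-type. Next, assume $M$ is minimal in a hypersphere of radius $r$ centered at some point $c\in\mathbb{R}^{n+1}$. The position vector $\phi-c$ has constant norm $r$, and for a minimal submanifold of such a hypersphere one has $\mcv^0=-\tfrac{1}{r^2}(\phi-c)$ (the only contribution to $\mcv^0$ comes from the extrinsic curvature of the ambient hypersphere). Beltrami's formula then yields $\Delta(\phi-c)=\tfrac{m}{r^2}(\phi-c)$, so $\phi=c+(\phi-c)$ is again a $1$-type decomposition with eigenvalue $\lambda=m/r^2>0$.

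\textbf{Necessity.} Conversely, assume $M$ is of $1$-type, so that $\phi=\phi_0+\phi_{t_1}$ with $\phi_0\in\mathbb{R}^{n+1}$ constant and $\Delta\phi_{t_1}=\lambda\,\phi_{t_1}$ for a single eigenvalue $\lambda$. Applying Beltrami,
$$
-m\,\mcv^0 = \Delta\phi = \lambda(\phi-\phi_0).
$$
If $\lambda=0$, then $\mcv^0=0$ and $M$ is minimal in $\mathbb{R}^{n+1}$. If $\lambda\neq 0$, then $\phi-\phi_0=-\tfrac{m}{\lambda}\mcv^0$. For any $X\in C(TM)$, since $d\phi(X)$ is tangent to $M$ while $\mcv^0$ is normal,
$$
X\bigl(|\phi-\phi_0|^2\bigr)=2\langle d\phi(X),\phi-\phi_0\rangle=-\tfrac{2m}{\lambda}\langle d\phi(X),\mcv^0\rangle=0.
$$
Hence $|\phi-\phi_0|^2$ is constant (using connectedness of $M$), say equal to $r^2$, and $M$ lies inside the hypersphere of radius $r$ centered at $\phi_0$. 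To conclude, I would decompose $\mcv^0=\mcv^{S}+\mcv^{S\subset\mathbb{R}^{n+1}}$, where $\mcv^{S}$ is the mean curvature of $M$ inside the hypersphere (hence tangent to it, i.e.\ orthogonal to $\phi-\phi_0$) and $\mcv^{S\subset\mathbb{R}^{n+1}}=-\tfrac{1}{r^2}(\phi-\phi_0)$ is the mean curvature vector of the ambient hypersphere in $\mathbb{R}^{n+1}$. Comparing with $\mcv^0=-\tfrac{\lambda}{m}(\phi-\phi_0)$ and projecting onto $\phi-\phi_0$ forces $\mcv^{S}=0$ and pins down $\lambda=m/r^2$, completing the proof.

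\textbf{Main obstacle.} The only nontrivial point is showing in the necessity direction that $M$ lies on a round hypersphere; everything else is algebra. The required step is a short computation exploiting the fact that $\mcv^0$ is a normal field, which trivializes once one has written $\phi-\phi_0$ as a scalar multiple of $\mcv^0$. I do not expect any technical difficulty; the statement is essentially a direct translation of Beltrami's identity combined with the defining spectral decomposition.
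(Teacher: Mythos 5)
Your proof is correct. Note that the paper itself gives no proof of this statement: it is Takahashi's classical theorem, cited from \cite{T66} and used as a black box. Your argument via Beltrami's formula $\Delta\phi=-m\mcv^0$ --- splitting into the cases $\lambda=0$ and $\lambda\neq 0$, deducing from the normality of $\mcv^0$ that $|\phi-\phi_0|$ is constant, and then comparing $\mcv^0$ with the spherical decomposition $\mcv^0=\mcv^{S}-\tfrac{1}{r^2}(\phi-\phi_0)$ --- is the standard proof of this result and is consistent with the sign conventions and with Definition~5.1 of the paper (under which the $\lambda=0$ case is the null $1$-type of a non-compact minimal submanifold of $\mathbb{R}^{n+1}$).
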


\begin{defi}
A submanifold $M\subset\mathbb{S}^n$ is said to be of {\em finite type} if it is of finite type as a submanifold of $\mathbb{R}^{n+1}$, where $\mathbb{S}^n$ is canonically embedded in $\mathbb{R}^{n+1}$. Moreover, a non-null finite type submanifold in $\mathbb{S}^n$ is said to be \textit{mass-symmetric} if the constant vector $\phi_0$ of its spectral decomposition is the center of the hypersphere $\mathbb{S}^n$, i.e. $\phi_0=0$.
\end{defi}

\begin{remark}
By Theorem~\ref{th: Takahashi}, biharmonic submanifolds of class B3  are $1$-type submanifolds. Indeed, the inclusion $\phi:M\to \mathbb{R}^{n+1}$ of $M$ in $\mathbb{R}^{n+1}$ has the spectral decomposition
$$
\phi=\phi_0+\phi_p,
$$
where $\phi_0=(0,1/\sqrt 2)$, $\phi_p:M\to\mathbb{R}^{n+1}$, $\phi_p(x,1/\sqrt 2)=(x,0)$ and $\Delta \phi_p=2m \phi_p$.

Moreover, biharmonic submanifolds of class B4 are mass-symmetric $2$-type submanifolds. Indeed, $\phi:M_1\times M_2\to \mathbb{R}^{n+1}$ has the spectral decomposition
$$
\phi=\phi_p+\phi_q,
$$
where $\phi_p(x,y)=(x,0)$, $\phi_q(x,y)=(0,y)$, $\Delta \phi_p=2m_1\phi_p$, $\Delta \phi_q=2m_2\phi_q$.
\end{remark}

Let $\varphi:M\to\mathbb{S}^n$ be a submanifold in $\mathbb{S}^n$ and denote by $\phi=\mathbf{i}\circ\varphi:M\to\mathbb{R}^{n+1}$ the inclusion of $M$ in $\mathbb{R}^{n+1}$. Denote by ${\mcv}$ the mean curvature
vector field of $M$ in $\mathbb{S}^n$ and by ${\mcv}^0$ the mean curvature vector field of $M$ in $\mathbb{R}^{n+1}$.

The mean curvature vector fields ${\mcv}^0$ and ${\mcv}$ are
related by ${\mcv}^0={\mcv}-\phi$. Moreover, we have
\begin{equation}\label{eq: H0H}
\langle {\mcv},\phi\rangle=0,\quad \langle {\mcv}^0,{\mcv}\rangle={\mcf}^2,\quad \langle {\mcv}^0, \phi\rangle=-1.
\end{equation}

Following \cite{CMO02} the bitension field of $\varphi$ can be written as
$$
\tau_2(\varphi)=-m \Delta\mcv^0+2m^2\mcv^0+m^2\{2-|\mcv^0|^2\}\phi.
$$
Thus,
$\tau_2(\varphi)=0$ if and only if
\begin{subequations}
\begin{equation}\label{eq: caract_bih_HH}
\Delta {\mcv}^0-2m{\mcv}^0+m({\mcf}^2-1)\phi=0.
\end{equation}
or equivalently, since $\Delta\phi=-m\mcv^0$,
\begin{equation}\label{eq: caract_bih_varphi}
\Delta^2\phi-2m\Delta\phi-m^2({\mcf}^2-1)\phi=0,
\end{equation}
\end{subequations}

In \cite[Theorem~3.1]{BMO08} we proved that CMC compact proper biharmonic submanifolds in $\mathbb{S}^n$ are of $1$-type or $2$-type.  This result can be generalized to the following.
\begin{theorem}\label{th: type12bih}
Let $\varphi:M\to\mathbb{S}^n$ be a proper biharmonic submanifold, not necessarily compact, in the unit Euclidean sphere $\mathbb{S}^n$. Denote by $\phi=\mathbf{i}\circ\varphi:M\to\mathbb{R}^{n+1}$ the inclusion of $M$ in $\mathbb{R}^{n+1}$, where $\mathbf{i}:\mathbb{S}^n\to\mathbb{R}^{n+1}$ is the canonical inclusion map. Then
\begin{itemize}
  \item[(i)] $M$ is a $1$-type submanifold of $\mathbb{R}^{n+1}$ if and only if ${\mcf}=1$. In this case, $\phi=\phi_0+\phi_p$, $\Delta \phi_p=2m\phi_p$, and $\phi_0\in\mathbb{R}^{n+1}$, $|\phi_0|=1/\sqrt 2$.

  \item[(ii)] $M$ is a $2$-type submanifold if and only if ${\mcf}={\rm constant}$, ${\mcf}\in(0,1)$. In this case, $\phi=\phi_p+\phi_q$, $\Delta \phi_p=m(1-{\mcf})x_p$, $\Delta \phi_q=m(1+{\mcf})x_q$.
\end{itemize}
\end{theorem}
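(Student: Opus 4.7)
The workhorse is the biharmonic equation in the ambient $\mathbb{R}^{n+1}$, \eqref{eq: caract_bih_varphi}, namely $\Delta^2\phi - 2m\Delta\phi - m^2(\mcf^2-1)\phi = 0$, combined with Chen--Petrovic's criterion (Theorem~\ref{th: crit_fin_type}) and the identities in \eqref{eq: H0H}. In each case the ``easy'' implication constructs a spectral decomposition directly from \eqref{eq: caract_bih_varphi}, while the ``hard'' implication extracts from the $k$-type hypothesis just enough information to pin down $\mcf$ and $\phi_0$.

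\emph{Part (i), $(\Leftarrow)$.} If $\mcf = 1$, the biharmonic equation reduces to $\Delta(\Delta\phi - 2m\phi) = 0$. The plan is to set $Y := \Delta\phi - 2m\phi = -m(\mcv+\phi)$ (using $\Delta\phi = -m\mcv^0 = -m(\mcv-\phi)$); each component of $Y$ is harmonic, and by \eqref{eq: H0H} we get $|Y|^2 = m^2(\mcf^2+1) = 2m^2$, constant. The identity $\Delta|Y|^2 = 2\langle Y, \Delta Y\rangle - 2|dY|^2$ together with $\Delta Y = 0$ then forces $dY = 0$, so $Y$ is a constant vector. Taking $\phi_0 := -Y/(2m) = (\phi+\mcv)/2$ and $\phi_p := \phi - \phi_0$ yields the required decomposition with $|\phi_0| = 1/\sqrt 2$.

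\emph{Part (i), $(\Rightarrow)$.} Substituting $\Delta\phi = \lambda(\phi-\phi_0)$ into \eqref{eq: caract_bih_varphi} produces the vector identity
$$\bigl[\lambda(\lambda-2m) - m^2(\mcf^2-1)\bigr]\,\phi \;=\; \lambda(\lambda-2m)\,\phi_0. \qquad(\ast)$$
If $\phi_0 = 0$, then $\Delta|\phi|^2 = 2\lambda - 2m = 0$ (from $|\phi|^2 = 1$ and $|d\phi|^2 = m$) forces $\lambda = m$, whence $\mcv = \mcv^0 + \phi = 0$, contradicting properness. If $\phi_0 \ne 0$, observing that $\phi(M) \subset \mathbb{S}^n$ cannot be contained in the line $\mathbb{R}\phi_0$ (since $M$ is connected of positive dimension and $\phi$ is an immersion), the identity $(\ast)$ forces both $\lambda(\lambda-2m) = 0$ and $\mcf = 1$. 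The root $\lambda = 0$ is ruled out by the same $\Delta|\phi|^2$ computation ($-2m\ne 0$), leaving $\lambda = 2m$.

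\emph{Part (ii).} If $\mcf \in (0,1)$ is constant, \eqref{eq: caract_bih_varphi} factors as $(\Delta - m(1-\mcf))(\Delta - m(1+\mcf))\phi = 0$ with distinct roots; Theorem~\ref{th: crit_fin_type}(ii) with $\phi_0 = 0$ then places $M$ in finite type of order $\leq 2$, and part (i) rules out $1$-type since $\mcf \ne 1$, giving $k=2$ with $\phi_p = (\phi + \mcv/\mcf)/2$, $\phi_q = (\phi - \mcv/\mcf)/2$. Conversely, assume $(\Delta - \lambda_1)(\Delta - \lambda_2)(\phi - \phi_0) = 0$ with $\lambda_1 \ne \lambda_2$; expanding and reducing modulo \eqref{eq: caract_bih_varphi} produces
$$-m\alpha\,\mcv^0 + \beta\,\phi \;=\; \lambda_1\lambda_2\,\phi_0, \qquad \alpha := 2m - \lambda_1 - \lambda_2,\ \ \beta := \lambda_1\lambda_2 - m^2(1-\mcf^2).$$
The crucial step is to differentiate this along an arbitrary $X \in TM$ and exploit the orthogonal splitting $T\mathbb{R}^{n+1}|_M = TM \oplus NM \oplus \mathbb{R}\phi$, using $\nabla^{\mathbb{R}^{n+1}}_X\phi = X$ and the Gauss--Weingarten decomposition $\nabla^{\mathbb{R}^{n+1}}_X\mcv^0 = -(A_\mcv + \Id)X + \nabla^\perp_X\mcv$. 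The $\phi$-component of the derivative then reads $X(\beta) = 0$, so $\beta$ and therefore $\mcf$ is constant; Theorem~\ref{th: classif_bih const mean} forces $\mcf \in (0,1]$, and part (i) excludes $\mcf = 1$ for a $2$-type submanifold. The main obstacle is precisely this isolation: the three-way splitting is essential because it disentangles the $\phi$-component from the (auxiliary) $TM$-component (a pseudo-umbilicity relation when $\alpha\ne 0$) and $NM$-component (PMC when $\alpha \ne 0$), and delivers constancy of $\mcf$ without any a priori PMC or pseudo-umbilical hypothesis.
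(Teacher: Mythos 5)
Your proof is correct, and the overall skeleton coincides with the paper's (the ambient biharmonic equation \eqref{eq: caract_bih_varphi} plus the Chen--Petrovic criteria, then isolating the $\phi$-component to force $\grad\mcf^2=0$), but two of your steps take a genuinely different local route. For the converse of (i) the paper simply invokes Theorem~\ref{th: classif_bih const mean} (so $M$ is minimal in $\mathbb{S}^{n-1}(1/\sqrt2)$) together with Takahashi's theorem, whereas you construct the decomposition by hand: $Y=\Delta\phi-2m\phi=-m(\mcv+\phi)$ is componentwise harmonic with constant norm $\sqrt{2}\,m$, so the Bochner identity $\Delta|Y|^2=2\langle Y,\Delta Y\rangle-2|dY|^2$ forces $Y$ to be a constant vector; this is self-contained and, as a bonus, reproves the ``$|\phi_0|=1/\sqrt2$'' claim without the detour through the small hypersphere. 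For the forward direction of (ii) the paper takes scalar products of the reduced identity with $\mcv$, with $X\in C(TM)$ and with $\phi$, splits into the null ($b=0$) and non-null cases, and then differentiates $\langle\phi_0,\phi\rangle$ using $\grad\langle\phi_0,\phi\rangle=(\phi_0)^{\top}$; your covariant differentiation of the full vector identity along $X$, read off in the splitting $TM\oplus NM\oplus\mathbb{R}\phi$ via $\nabla^0_X\mcv^0=-(A_{\mcv}+\Id)X+\nabla^\perp_X\mcv$, extracts $X(\beta)=0$ in one stroke and makes the null-$2$-type case split unnecessary. The two computations are equivalent in content, but yours is slightly more uniform; the paper's is lighter on Gauss--Weingarten bookkeeping. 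Your parts (i)$(\Rightarrow)$ (via $\Delta\phi=\lambda(\phi-\phi_0)$ and the dichotomy on $\phi_0$, with $\Delta|\phi|^2=0$ pinning $\lambda$) and (ii)$(\Leftarrow)$ match the paper's argument in substance.
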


\begin{proof}
In order to prove (i), notice that the converse is obvious, by Theorem~\ref{th: Takahashi} and Theorem~\ref{th: classif_bih const mean}.

Let us suppose that $M$ is a $1$-type submanifold. From Theorem~ \ref{th: crit_fin_type_H}(i), it follows that there exists $a\in\mathbb{R}$ such that
\begin{equation}\label{eq: M_1-type}
\Delta {\mcv}^0=a{\mcv}^0.
\end{equation}
Equations \eqref{eq: caract_bih_HH} and \eqref{eq: M_1-type} imply
$$
(2m-a){\mcv}^0-m({\mcf}^2-1)\phi=0,
$$
and by considering the scalar product with ${\mcv}$ and using \eqref{eq: H0H}, since $M$ is proper biharmonic, we get $a=2m$ and
$$
m({\mcf}^2-1)\phi=0.
$$
Thus ${\mcf}=1$. Now, as the map $\phi$ can not be harmonic,
\eqref{eq: caract_bih_varphi} leads to the spectral decomposition
$\phi=\phi_0+\phi_p$, $\Delta\phi_p=2m\phi_p$. Since $\Delta
\phi=-m\mcv^0$, taking into account the relation between $\mcv$ and
$\mcv^0$, we obtain $2\phi_0=\phi+\mcv$. Since $|\phi|=1=\mcf$, and
$\mcv$ is orthogonal to $\phi$, we conclude that $|\phi_0|=1/\sqrt
2$.

Let us now prove (ii). The converse of (ii) follows immediately.
Indeed, from \eqref{eq: caract_bih_varphi}, if ${\mcf}={\rm
constant}$, ${\mcf}\in (0,1)$, then choosing the constant vector
$\phi_0=0$ and the polynomial with simple roots
$$
P(\Delta)=\Delta^2-2m\Delta^1-m^2({\mcf}^2-1)\Delta^0,
$$
we are in the hypotheses of Theorem~\ref{th: crit_fin_type}(ii). Thus $M$ is of finite $k$-type, with $k\leq2$. Taking into account (i), since ${\mcf}\in(0,1)$, this implies that $M$ is a $2$-type submanifold with $$
\phi=\phi_p+\phi_q,
$$
with corresponding eigenvalues $\lambda_p=m(1-{\mcf})$, $\lambda_q=m(1+{\mcf})$. Also, notice that
$$
\phi_p=\frac{\lambda_q}{\lambda_q-\lambda_p}\phi-\frac{1}{\lambda_q-\lambda_q}\Delta\phi,\qquad
\phi_q=-\frac{\lambda_p}{\lambda_q-\lambda_p}\phi+\frac{1}{\lambda_q-\lambda_q}\Delta\phi,
$$
which are smooth non-zero maps.

Suppose now that $M$ is a $2$-type submanifold. From Theorem~\ref{th: crit_fin_type}(i), it follows that there exist a constant vector $\phi_0\in\mathbb{R}^{n+1}$ and $a, b\in\mathbb{R}$ such that
\begin{equation}\label{eq: M_2-type}
\Delta {\mcv}^0=a{\mcv}^0+b(\phi-\phi_0).
\end{equation}
Equations \eqref{eq: caract_bih_HH} and \eqref{eq: M_2-type} lead to
\begin{equation}\label{eq: conseq_2+bih}
(2m-a){\mcv}^0-(m({\mcf}^2-1)+b)\phi+b \phi_0=0.
\end{equation}
We have to consider two cases.\\
\textit{Case 1.} If $b=0$, i.e. $M$ is a null $2$-type submanifold, by taking the scalar product with ${\mcv}$ in \eqref{eq: conseq_2+bih} and using \eqref{eq: H0H}, since $M$ is proper biharmonic, we get $a=2m$ and ${\mcf}=1$. By (i), this leads to a contradiction.\\
\textit{Case 2.} If $b\neq 0$, by taking the scalar product with $X\in C(TM)$ in \eqref{eq: conseq_2+bih}, we obtain $\langle \phi_0, X\rangle=0$, for all $X\in C(TM)$, i.e. the component of $\phi_0$ tangent to $M$ vanishes
\begin{equation}\label{eq: varphi0^T=0}
(\phi_0)^{\top}=0.
\end{equation}
Take now the scalar product with $\phi$ in \eqref{eq: conseq_2+bih} and use \eqref{eq: H0H}. We obtain
$$
-2m+a-m({\mcf}^2-1)-b+b\langle \phi_0,\phi\rangle=0,
$$
and, by differentiating,
\begin{equation}\label{eq: consec_prod_phi}
m\grad {\mcf}^2=b\grad \langle \phi_0, \phi\rangle.
\end{equation}
Now, by considering $\{E_i\}_{i=1}^m$ to be a local orthonormal frame field on $M$, we have
\begin{eqnarray}
\grad\langle \phi_0, \phi\rangle&=&\sum_{i=1}^mE_i(\langle \phi_0, \phi\rangle)E_i=\sum_{i=1}^m\langle \phi_0,\nabla^{0}_{E_i}\phi\rangle E_i=\sum_{i=1}^m\langle \phi_0,E_i\rangle E_i\nonumber\\
&=&(\phi_0)^{\top}.
\end{eqnarray}
This, together with equations \eqref{eq: varphi0^T=0} and \eqref{eq: consec_prod_phi}, leads to ${\mcf}={\rm constant}$ and using Theorem~\ref{th: classif_bih const mean} we conclude the proof.
\end{proof}

\begin{remark}\quad
The direct implication of (i) in Theorem~\ref{th: type12bih} can be also proved in a more geometric manner (see \cite{BFO09}).
\end{remark}

We are now interested in proper biharmonic submanifolds of $3$-type
in spheres. In \cite{CJ91} it was proved that there are no CMC
3-type hypersurfaces in a hypersphere of the Euclidean space. Since
the known examples of proper biharmonic hypersurfaces in spheres are
CMC, one may think that there are no such hypersurfaces of type
$3$. Indeed, we have a more general result.
\begin{theorem}\label{th: type3bih}
There exist no PNMC biharmonic $3$-type submanifolds $M^m$ in the unit Euclidean sphere $\mathbb{S}^n$.
\end{theorem}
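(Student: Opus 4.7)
Suppose for contradiction that $M^m$ is a PNMC biharmonic submanifold of $\mathbb{S}^n$ which is of $3$-type. By Theorem~\ref{th: type12bih}, a proper biharmonic submanifold of $\mathbb{S}^n$ is of $1$- or $2$-type exactly when $\mcf$ is constant, so $\mcf$ cannot be constant; hence there is a nonempty open set $U\subset M$ on which $\grad \mcf\neq 0$. Theorem~\ref{th: crit_fin_type}(i) then provides a constant vector $\phi_0\in\mathbb{R}^{n+1}$ and three distinct reals $\lambda_1,\lambda_2,\lambda_3$ such that
\[
\Delta^3\phi-s_1\Delta^2\phi+s_2\Delta\phi-s_3\phi+s_3\phi_0=0,
\]
where $s_1,s_2,s_3$ are the elementary symmetric polynomials in $\lambda_1,\lambda_2,\lambda_3$.

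My next step is to unpack this identity using the biharmonic equation \eqref{eq: caract_bih_varphi}. Starting from $\Delta\phi=-m\mcv^0=m\phi-m\mcv$, a direct computation gives $\Delta^2\phi=m^2(1+\mcf^2)\phi-2m^2\mcv$, and iterating once more expresses $\Delta^3\phi$ as an $\mathbb{R}^{n+1}$-valued combination of $\phi$, $\mcv$ and $\grad(\mcf^2)$ with scalar coefficients involving $\mcf^2$ and $\Delta(\mcf^2)$. Substituting and collecting terms reduces the $3$-type identity to
\[
F_1\phi+F_2\mcv-2m^2\grad(\mcf^2)+s_3\phi_0=0,
\]
with $F_1=F_1(\mcf,\Delta(\mcf^2),s_1,s_2,s_3)$ and $F_2=F_2(\mcf,s_1,s_2)$ explicit scalar functions.

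At each point of $U$ the vectors $\phi$, $\mcv$ and $\grad(\mcf^2)$ lie in mutually orthogonal subspaces of $\mathbb{R}^{n+1}$, and the normal bundle of $M$ in $\mathbb{S}^n$ splits as $\spn\{\xi\}\oplus\xi^{\perp}$. I would then project the reduced identity onto each of these directions. The tangential projection yields $s_3(\phi_0)^{\top}=2m^2\grad(\mcf^2)$; if $s_3=0$ we immediately get $\grad(\mcf^2)=0$ on $U$, a contradiction, so $s_3\neq 0$ and $(\phi_0)^{\top}=\tfrac{2m^2}{s_3}\grad(\mcf^2)$. Projecting on the subbundle $\xi^{\perp}$ of the normal bundle annihilates the corresponding components of $\phi_0$, and projecting on $\xi$ gives $\langle\phi_0,\xi\rangle=\rho(\mcf)$ with the explicit scalar $\rho(\mcf)=\mcf\,[m^3(3+\mcf^2)-2m^2s_1+ms_2]/s_3$.

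The decisive step is to evaluate $X\langle\phi_0,\xi\rangle$ for $X\in TM$ in two independent ways. Since $\phi_0$ is constant in $\mathbb{R}^{n+1}$ and the Gauss--Weingarten formulas together with PNMC give $\nabla^{\mathbb{R}^{n+1}}_X\xi=-AX$, one has $X\langle\phi_0,\xi\rangle=-\langle(\phi_0)^{\top},AX\rangle$; plugging in the tangential formula and using \eqref{eq: caract_bih_PNMC_spheres_2}(iii) in the form $A(\grad(\mcf^2))=-\tfrac{m}{2}\mcf\grad(\mcf^2)$ yields $X\langle\phi_0,\xi\rangle=\tfrac{2m^3\mcf^2}{s_3}X(\mcf)$. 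Comparing with the direct differentiation $X\langle\phi_0,\xi\rangle=\rho'(\mcf)X(\mcf)$ on $U$ produces $\rho'(\mcf)=2m^3\mcf^2/s_3$, which after simplification reduces to $\mcf^2=\tfrac{2s_1}{m}-3-\tfrac{s_2}{m^2}$. Since the right-hand side is a constant while $\mcf$ is non-constant on $U$, this is a contradiction. The main obstacle is the careful bookkeeping when iterating the Laplacian on the position vector $\phi$; the two geometric ingredients that break the $3$-type assumption are the Weingarten identity $\nabla^{\mathbb{R}^{n+1}}_X\xi=-AX$ coming from PNMC and the PNMC-biharmonic condition \eqref{eq: caract_bih_PNMC_spheres_2}(iii).
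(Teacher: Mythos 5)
Your argument is correct and is essentially the paper's own proof in different clothing: the paper writes the degree-$3$ identity as $\Delta^2\mcv^0=a\Delta\mcv^0+b\mcv^0+c(\phi-\phi_0)$ instead of $P(\Delta)(\phi-\phi_0)=0$, performs the same projections onto $TM$, $\xi$ and $\xi^\perp$, and likewise computes $\grad\langle\phi_0,\xi\rangle$ in two ways using $\nabla^\perp\xi=0$ and \eqref{eq: caract_bih_PNMC_spheres_2}(iii) to force $\mcf$ constant, contradicting Theorem~\ref{th: type12bih}. The only (immaterial) difference is that you localize to the open set where $\grad\mcf\neq 0$ at the outset, while the paper concludes CMC globally and then invokes Theorem~\ref{th: type12bih}.
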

\begin{proof}
Suppose now that $M$ is a PNMC biharmonic $3$-type submanifold. From Theorem~ \ref{th: crit_fin_type}, it follows that there exist a constant vector $\phi_0\in\mathbb{R}^{n+1}$ and $a, b, c\in\mathbb{R}$ such that
\begin{equation}\label{eq: M_3-type}
\Delta^2 {\mcv}^0=a\Delta {\mcv}^0+b{\mcv}^0+c(\phi-\phi_0).
\end{equation}
Equations \eqref{eq: caract_bih_HH} and \eqref{eq: M_3-type} lead to
\begin{equation}\label{eq: consec3type}
\Delta^2 {\mcv}^0=(2ma+b){\mcv}^0+(c-ma({\mcf}^2-1))\phi-c\phi_0.
\end{equation}
Now, by applying $\Delta$ to equation \eqref{eq: caract_bih_HH} we get
\begin{eqnarray}\label{eq: consecbih}
\Delta^2{\mcv}^0&=&m^2(3+{\mcf}^2){\mcv}^0-(m\Delta{\mcf}^2+2m^2({\mcf}^2-1))\phi\nonumber\\
&&+2m\,d\phi(\grad {\mcf}^2).
\end{eqnarray}
By taking the scalar product with $\xi={\mcv}/\mcf$ in \eqref{eq: consec3type} and \eqref{eq: consecbih} and by using \eqref{eq: H0H}, we obtain
\begin{equation}\label{eq: prod H}
-c\langle \phi_0,\xi\rangle=m^2{\mcf}^3+(3m^2-2ma-b){\mcf}.
\end{equation}
Consider now the scalar product with $X\in C(TM)$ in \eqref{eq: consec3type} and \eqref{eq: consecbih}. This implies
$$
-c\langle \phi_0,X\rangle=2m X({\mcf}^2),
$$
and, further, the component of $c\phi_0$ tangent to $M$ is given by
\begin{equation}\label{eq: prod Xi}
-c(\phi_0)^{\top}=2m \grad{\mcf}^2.
\end{equation}
Moreover, by taking the scalar product with an arbitrary vector field $\eta$ normal to $M$ in $\mathbb{S}^n$, $\eta\perp \xi$, in \eqref{eq: consec3type} and \eqref{eq: consecbih} we find
\begin{equation}\label{eq: prod eta}
-c\langle \phi_0,\eta\rangle=0.
\end{equation}
Equations \eqref{eq: prod Xi} and \eqref{eq: prod eta} lead to
\begin{equation}\label{eq: varphi0}
-c\phi_0=2m\grad{\mcf}^2-c\langle \phi_0,\xi\rangle \xi-c\langle \phi_0,\phi\rangle\phi.
\end{equation}

Differentiating \eqref{eq: prod H}, one gets
\begin{equation}\label{eq: grad_varphi0_H}
-c\grad\langle\phi_0,\xi\rangle=(3m^2{\mcf}^2+3m^2-2ma-b)\grad{\mcf}.
\end{equation}
By considering $\{E_i\}_{i=1}^m$ to be a local orthonormal frame field on $M$ and using $\nabla^\perp\xi=0$, \eqref{eq: varphi0}, \eqref{eq: prod H} and \eqref{eq: prod Xi}, we have the following
\begin{eqnarray}\label{eq: grad_varphi0_H_bis}
-c\grad\langle\phi_0,\xi\rangle&=&-c\sum_{i=1}^m E_i(\langle\phi_0,\xi\rangle)E_i=-c\sum_{i=1}^m (\langle \nabla^{0}_{E_i}\phi_0,\xi\rangle+\langle \phi_0,\nabla^{0}_{E_i}\xi\rangle)E_i\nonumber\\
&=&-c\sum_{i=1}^m \langle \phi_0,\nabla_{E_i}\xi\rangle E_i=-c\sum_{i=1}^m \langle \phi_0,\nabla^\perp_{E_i}\xi -A(E_i)\rangle E_i\nonumber\\
&=&c\sum_{i=1}^m \langle (\phi_0)^{\top}, A(E_i)\rangle E_i
=\sum_{i=1}^m \langle A(c(\phi_0)^{\top}), E_i\rangle E_i\nonumber\\
&=&-2mA(\grad {\mcf}^2).
\end{eqnarray}

Equations \eqref{eq: grad_varphi0_H} and \eqref{eq: grad_varphi0_H_bis} imply
\begin{equation}\label{eq: AH_gradH2_3}
2m A(\grad {\mcf}^2)=\left(-3m^2{\mcf}^2 -3m^2+2ma+b\right)\grad {\mcf}.
\end{equation}

Since $M$ is a PNMC biharmonic submanifold, \eqref{eq:
caract_bih_PNMC_spheres_2} (iii), together with \eqref{eq:
AH_gradH2_3}, leads to
$$
\left(m^2{\mcf}^2+3m^2-2ma-b\right)\grad {\mcf}=0,
$$
on $M$. This implies that $\grad \mcf=0$, i.e. $M$ is CMC. From Theorem~\ref{th: type12bih}, we have that $M$ is a $1$-type or $2$-type submanifold and we get to a contradiction.

\end{proof}

Since any hypersurface with nowhere zero mean curvature is PNMC we have the following.
\begin{corollary}\label{cor: type3bih_hypersurf}
There exist no biharmonic $3$-type hypersurfaces $M^m$ in the unit Euclidean sphere $\mathbb{S}^{m+1}$.
\end{corollary}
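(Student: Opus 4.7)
The plan is to deduce the corollary from Theorem~\ref{th: type3bih} via the observation that any hypersurface with nowhere zero mean curvature is automatically PNMC. Indeed, the normal bundle $NM$ has rank one; if $\xi = \mcv/\mcf$ is the unit mean curvature section, then for every $X \in C(TM)$ the vector $\nabla^\perp_X \xi$ lies in $NM$ and is orthogonal to $\xi$ (because $|\xi|=1$), hence vanishes. So the only work is to rule out $\mcv$ vanishing somewhere.

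Suppose for contradiction that $M^m \subset \mathbb{S}^{m+1}$ is a biharmonic $3$-type hypersurface. Since $M$ is $3$-type it is not $1$-type, and by Takahashi's theorem (Theorem~\ref{th: Takahashi}) any minimal submanifold of $\mathbb{S}^{m+1}$ is of $1$-type, so $M$ is not minimal, i.e.\ $M$ is proper biharmonic. By the unique continuation principle for biharmonic maps (as invoked in Remark (iii) at the end of Section~\ref{sec: pnmc_<2}), the set $W = \{p \in M : \mcv(p) \neq 0\}$ is an open dense subset of $M$, and by the observation above the hypersurface is PNMC on $W$.

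On $W$ I would then rerun the argument from the proof of Theorem~\ref{th: type3bih} verbatim: the $3$-type relation \eqref{eq: M_3-type}, with constants $a, b, c$ and constant vector $\phi_0$ coming from the global spectral decomposition, holds pointwise on all of $M$; the biharmonic equation \eqref{eq: caract_bih_HH} holds on $M$; and the PNMC identity $\nabla^\perp \xi = 0$ together with \eqref{eq: caract_bih_PNMC_spheres_2}(iii) holds on $W$. Combining these exactly as in that proof yields the pointwise identity $(m^2 \mcf^2 + 3m^2 - 2ma - b)\grad \mcf = 0$ on $W$, which (as in the original argument, splitting on whether the scalar factor vanishes on an open set) forces $\grad \mcf = 0$ on $W$. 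By density of $W$ and continuity of $\mcf$, the mean curvature is then constant on all of $M$, so $M$ is CMC; Theorem~\ref{th: type12bih} then forces $M$ to be of $1$-type or $2$-type, contradicting the $3$-type assumption. The only hard part is really bookkeeping: checking that each step of the proof of Theorem~\ref{th: type3bih} is a genuinely pointwise identity and therefore transfers to the open dense set $W$ on which PNMC holds; a direct inspection of that proof confirms this.
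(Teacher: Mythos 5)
Your proof is correct and follows essentially the same route as the paper's: pass to the open dense set $W$ where $\mcv\neq 0$, observe that a hypersurface is automatically PNMC there (rank-one normal bundle), and apply the argument of Theorem~\ref{th: type3bih} on $W$. The only step worth tightening is the last one: $\grad\mcf=0$ on a dense open set together with continuity of $\mcf$ does not by itself force constancy (a Cantor-type function is a counterexample), but here $\mcf^2=|\mcv|^2$ is smooth on all of $M$, so $\grad\mcf^2$ is continuous, vanishes on the dense set $W$, hence vanishes everywhere, and connectedness of $M$ gives $\mcf$ constant.
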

\begin{proof}
Suppose that $M$ is of $3$-type. Then $M$ is not minimal in
$\mathbb{S}^{m+1}$, thus $\mcf$ is nowhere zero on an open dense
subset $W\subset M$. Every connected component of $W$ is PNMC and,
by Theorem~\ref{th: type3bih}, it can not be of $3$-type. This leads
to a contradiction.
\end{proof}

We note that the classes B3 and B4 of proper biharmonic submanifolds in spheres are linearly independent (even more, orthogonal) $2$-type submanifolds. Thus it is natural to ask weather there exist proper biharmonic independent higher finite type submanifolds. We can prove the following result.
\begin{proposition}\label{prop: high_type_bih}
Let $M$ be a proper biharmonic submanifold in $\mathbb{S}^n$. If  $M$ is of finite $k$-type, mass-symmetric and linearly independent, then $k=2$.
\end{proposition}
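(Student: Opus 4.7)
The plan is to exploit the biharmonic equation in the form \eqref{eq: caract_bih_varphi} against the spectral decomposition of $\phi$, using the mass-symmetric hypothesis to eliminate the constant term. Writing $\phi=\sum_{i=1}^{k}\phi_{t_i}$ with $\Delta\phi_{t_i}=\lambda_{t_i}\phi_{t_i}$, and substituting into
$$\Delta^{2}\phi-2m\Delta\phi-m^{2}({\mcf}^{2}-1)\phi=0,$$
I would obtain, at every $u\in M$, the identity
$$\sum_{i=1}^{k}\bigl[\lambda_{t_i}^{2}-2m\lambda_{t_i}-m^{2}({\mcf}^{2}(u)-1)\bigr]\phi_{t_i}(u)=0.$$

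Next I would invoke the linear-independence assumption: for each fixed $u$, the vector $\bigl[\lambda_{t_i}^{2}-2m\lambda_{t_i}-m^{2}({\mcf}^{2}(u)-1)\bigr]\phi_{t_i}(u)$ lies in $E_{t_i}$, so the relation above forces each of these vectors to vanish individually. Because $\phi(u)\in\mathbb{S}^{n}$ is nonzero, at least one $\phi_{t_i}(u)$ is nonzero, and hence
$$\mcf^{2}(u)=\frac{\lambda_{t_i}^{2}-2m\lambda_{t_i}}{m^{2}}+1$$
for any such $i$. Thus $\mcf^{2}$ takes values in the finite set $\{(\lambda_{t_i}^{2}-2m\lambda_{t_i})/m^{2}+1:i=1,\dots,k\}$; by continuity and connectedness of $M$, $\mcf^{2}$ is constant, equal to some $c$.

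Once $\mcf^{2}\equiv c$, every index $i$ for which $\phi_{t_i}\not\equiv 0$ must satisfy $\lambda_{t_i}^{2}-2m\lambda_{t_i}=m^{2}(c-1)$, i.e.~$\lambda_{t_i}$ is a root of a fixed quadratic in one variable. Since a quadratic has at most two distinct roots and the $\lambda_{t_i}$'s are mutually distinct (by definition of $k$-type), and since each $\phi_{t_i}$ is by hypothesis non-constant (hence not identically zero), we conclude $k\leq 2$. To rule out $k=1$, I would apply Theorem~\ref{th: type12bih}(i): a proper biharmonic $1$-type submanifold of $\mathbb{S}^{n}$ has spectral decomposition $\phi=\phi_{0}+\phi_{p}$ with $|\phi_{0}|=1/\sqrt{2}\neq 0$, contradicting mass-symmetry. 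Therefore $k=2$.

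The argument is essentially mechanical once the right pointwise consequence of linear independence is extracted; the only delicate point is justifying that $\mcf^{2}$ can be forced to be constant, for which the combination of the pointwise vanishing, the finiteness of the possible values, and the connectedness of $M$ is crucial. No PNMC hypothesis and no use of \eqref{eq: caract_bih_PNMC_spheres_2}(iii) is required here, in contrast with Theorem~\ref{th: type3bih}.
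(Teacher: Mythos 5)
Your proposal is correct and follows essentially the same route as the paper: substitute the mass-symmetric spectral decomposition into \eqref{eq: caract_bih_varphi}, use linear independence to kill each term, deduce that ${\mcf}$ is constant, and invoke Theorem~\ref{th: type12bih} to pin down $k=2$. The only (harmless) difference is in the bookkeeping: you obtain constancy of ${\mcf}^2$ from finiteness of its possible values plus connectedness, whereas the paper uses that each $\phi_{t_i}$ is nonzero on an open dense set to get $\lambda_{t_i}^{2}-2m\lambda_{t_i}-m^{2}({\mcf}^{2}-1)=0$ for every $i$ directly.
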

\begin{proof}
Let $M$ be a $k$-type mass-symmetric submanifold in $\mathbb{S}^n$. Then, we have
$$
\phi=\phi_{t_1}+\phi_{t_2}+\ldots+\phi_{t_k},
$$
where $\phi_{t_i}$ are non-harmonic maps satisfying $\Delta\phi_{t_i}=\lambda_{t_i}\phi_{t_i}$, and $\lambda_{t_i}$ are mutually distinct, $i=1,\ldots, k$.
This implies that
\begin{equation}\label{eq: delta-delta2}
\Delta\phi=\sum_{i=1}^k \lambda_{t_i} \phi_{t_i}
\qquad\textrm{and}\qquad
\Delta^2\phi=\sum_{i=1}^k \lambda_{t_i}^2 \phi_{t_i}.
\end{equation}
Since $M$ is proper biharmonic, replacing \eqref{eq: delta-delta2} in \eqref{eq: caract_bih_varphi}, we obtain
$$
\sum_{i=1}^k (\lambda_{t_i}^2-2m\lambda_{t_i}-m^2({\mcf}^2-1)) \phi_{t_i}=0.
$$
Using that $M$ is independent, we get
$(\lambda_{t_i}^2-2m\lambda_{t_i}-m^2({\mcf}^2-1))\phi_{t_i}=0$ on
$M$, for all $i=1,\ldots,k$. Since $\phi_{t_i}$ is non-zero on an
open dense set in $M$, we have
$\lambda_{t_i}^2-2m\lambda_{t_i}-m^2({\mcf}^2-1)=0$ on $M$, for all
$i=1,\ldots,k$. This implies that ${\mcf}={\rm constant}$. Since
$\phi$ is mass-symmetric, by Theorem~\ref{th: type12bih}, we
conclude that $k=2$.
\end{proof}

\end{document}